\theoremstyle{thmstyleone}%
\newtheorem{theorem}{Theorem}
\newtheorem{proposition}[theorem]{Proposition}%
\theoremstyle{thmstyletwo}%
\newtheorem{example}{Example}%
\newtheorem{remark}{Remark}%
\theoremstyle{thmstylethree}%
\newtheorem{definition}{Definition}%
\newcommand{\cred}[1]{{\color{black} #1}}
\newcommand{\corr}[1]{{\color{black} #1}}
\newtheorem{lemma}[theorem]{Lemma}
\begin{document}
	
		\title[A preconditioned MINRES method for evolutionary PDEs]{A sine transform based preconditioned MINRES method for all-at-once systems from constant and variable-coefficient evolutionary PDEs}
	
	
	
	\author*[1]{\fnm{Sean} \sur{Hon}}\email{seanyshon@hkbu.edu.hk}

\author[1]{\fnm{Po Yin} \sur{Fung}}\email{pyfung@hkbu.edu.hk}

\author[1]{\fnm{Jiamei} \sur{Dong}}\email{21482799@life.hkbu.edu.hk}

\author[2,3]{\fnm{Stefano} \sur{Serra-Capizzano}}\email{s.serracapizzano@uninsubria.it}

\affil*[1]{\orgdiv{Department of Mathematics}, \orgname{Hong Kong Baptist University}, \orgaddress{\street{Kowloon Tong}, \city{Hong Kong SAR},  \country{China}}}

\affil[2]{\orgdiv{Department of Science and High Technology}, \orgname{University of Insubria}, \orgaddress{ \city{Como}, \postcode{6152}, \state{Lombardy}, \country{Italy}}}

\affil[3]{\orgdiv{Department of Information Technology}, \orgname{Uppsala University}, \orgaddress{ \city{Uppsala}, \postcode{75008}, \state{Uppland}, \country{Sweden}}}
	
	
	\abstract{In this work, we propose a simple yet generic preconditioned Krylov subspace method for a large class of nonsymmetric block Toeplitz all-at-once systems arising from discretizing evolutionary partial differential equations. Namely, our main result is \cred{to propose two} novel symmetric positive definite \cred{preconditioners}, which can be efficiently diagonalized by the discrete sine transform matrix. More specifically, our approach is to first permute the original linear system to obtain a symmetric one, and subsequently develop desired \cred{preconditioners} based on the spectral symbol of the modified matrix. Then, we show that the eigenvalues of the preconditioned matrix sequences are clustered around $\pm 1$, which entails rapid convergence when the minimal residual method is devised. Alternatively, when the conjugate gradient method on \cred{the} normal equations is used, we show that our preconditioner is effective in the sense that the eigenvalues of the preconditioned matrix sequence are \cred{clustered around unity}. An extension of our proposed preconditioned method is given for high-order backward difference time discretization schemes, which can be applied on a wide range of time-dependent equations. Numerical examples are given, also in the variable-coefficient setting, to demonstrate the effectiveness of our proposed \cred{preconditioners}, which consistently outperforms an existing block circulant preconditioner discussed in the relevant literature.}

	\keywords{sine transform based preconditioners; parallel-in-time; Krylov subspace methods; MINRES; all-at-once discretization; block Toeplitz systems}
	
	
	\pacs[MSC Classification]{15B05, 65F08, 65F10, 65M22}
	
	\maketitle
	
	\section{Introduction}\label{sec1}
	
Over the past few years, there has been a growing interest on developing effective preconditioners for solving the all-at-once linear systems stemming from evolutionary partial differential equations (PDEs); see for instance \cred{\cite{Bertaccini2001,BertacciniNg2003,doi:10.1137/16M1062016,McDonald2017,HON2021113965,0x003ab34e,https://doi.org/10.1002/nla.2386,doi:10.1137/20M1316354,LIN2021110221,doi:10.1137/19M1309869,WU2021110076} and references therein}. Instead of solving PDEs in a sequential manner, this class of parallel-in-time (PinT) methods solves \cred{for} all unknowns simultaneously by constructing a large linear system which is composed of smaller linear systems \cred{at each time level}. Our proposed method in this work belongs to the diagonalization-based all-at-once methods \cite{10.1007/978-3-319-18827-0_50}, along with \cite{doi:10.1137/16M1062016,doi:10.1137/20M1316354}. Related PinT methods include space-time multigrid \cite{doi:10.1137/15M1046605,doi:10.1137/0916050}, multigrid reduction in time \cite{doi:10.1137/130944230,doi:10.1137/16M1074096}, and parareal method \cite{LIONS2001661,doi:10.1137/05064607X}. For a survey on the development of these PinT methods, we refer the readers to \cite{10.1007/978-3-319-23321-5_3} and the references therein.


As a model problem, {we consider} the following initial-boundary \cred{(value)} problem for the heat equation
\begin{equation}\label{eqn:heat}
	\left\{
	\begin{array}{lc}
		\frac{\partial u(x,t)}{\partial t} = \cred{\nabla \cdot (a({x})\nabla u(x,t))}+ f(x,t), & (x,t)\in \Omega \times (0,T], \\
		u=g, & (x,t)\in \partial \Omega \times (0,T], \\
		u(x,0)=u_0(x), & x \in \Omega.
	\end{array}
	\right.\,
\end{equation}
where $\Omega$ is open, $\partial \Omega$ \cred{denotes the boundary} of $\Omega$, and $a, f,g,u_0$ are all given functions.

After discretizing the spatial domain with a mesh and using the simple two-level $\theta$-method for time discretization with $n$ time-steps of size $\tau = \frac{T}{n}$, we get
\[M_m\bigg(\frac{\mathbf{u}_m^{(k+1)}-\mathbf{u}_m^{(k)}}{\tau}\bigg) =  -K_m\big( \theta  \mathbf{u}_m^{(k+1)} +  (1-\theta)  \mathbf{u}_m^{(k)}\big) + \theta \mathbf{f}_m^{(k+1)} + (1-\theta) \mathbf{f}_m^{(k)},
\]
where $M_m, K_m \in \mathbb{R}^{m\times m}$ are the mass matrix and the stiffness matrix approximating \cred{$-\nabla \cdot (a({x})\nabla  )$}, respectively, in the standard finite element terminology, $\mathbf{u}_m^{(k)}=[u_1^{(k)}, \dots, u_m^{(k)}]^{T},$ $\mathbf{f}_m^{(k)}=[f_1^{(k)}, \dots, f_m^{(k)}]^{T},$ and $\theta\in[0,1]$. The forward Euler method corresponds to the choice $\theta=0$, the backward Euler method to $\theta=1$, while the Crank-Nicolson method
is associated with $\theta=1/2$.

The discrete equations to be solved are described below
\begin{equation}\nonumber
	(M_m+\theta \tau K_m) \mathbf{u}_m^{(k+1)} = (M_m- (1-\theta) \tau K_m) \mathbf{u}_m^{(k)} + \theta \tau \mathbf{f}_m^{(k+1)} + (1-\theta) \tau \mathbf{f}_m^{(k)}.
\end{equation}
{Instead of solving the above equations for $\mathbf{u}_m^{(k)}$ sequentially, for $k=1,2,\ldots, n$, we solve the following equivalent all-at-once system}
\[\mathcal{T}_H
\underbrace{\begin{bmatrix}{}\mathbf{u}_m^{(1)}\\ \mathbf{u}_m^{(2)}\\ \mathbf{u}_m^{(3)}\\ \vdots \\ \mathbf{u}_m^{(n)}
\end{bmatrix}}_{\corr{=:\mathbf{u}}}
=
\underbrace
{\begin{bmatrix}{} \theta \tau \mathbf{f}_m^{(1)}+ (1-\theta) \tau \mathbf{f}_m^{(0)} + (M_m- (1-\theta) \tau K_m) \mathbf{u}_m^{(0)}\\
		\theta \tau \mathbf{f}_m^{(2)}+ (1-\theta) \tau \mathbf{f}_m^{(1)}\\
		\theta \tau \mathbf{f}_m^{(3)}+ (1-\theta) \tau \mathbf{f}_m^{(2)}\\
		\vdots \\
		\theta \tau \mathbf{f}_m^{(n)}+ (1-\theta) \tau \mathbf{f}_m^{(n-1)}
\end{bmatrix} }_{\corr{=:\mathbf{f}_H}},
\]where
\begin{equation}\label{theta}
	\mathcal{T}_H=\begin{bmatrix}
		M_m+\theta \tau K_m &   &  &  \\
		-M_m+ (1-\theta) \tau K_m  & \ddots   & &  \\
		&   \ddots & \ddots &  \\
		&    & \ddots &  \ddots\\
		&   & -M_m+ (1-\theta) \tau K_m & M_m+\theta \tau K_m
	\end{bmatrix}
\end{equation}
is a $mn$ by $mn$ nonsymmetric block Toeplitz matrix, which is generated by the matrix-valued function
\begin{equation}\label{eqn:symbol_h}
	g_{\theta}(x)=\underbrace{M_m+\theta \tau K_m}_{=:A_{(0)}}+\underbrace{(-M_m+ (1-\theta) \tau K_m)}_{=:A_{(1)}}e^{\textbf{i}x}.
\end{equation}

According to the notion of generating function reported in Section \ref{sec:prelim} (see also \cite{book-GLT-I} and the references there reported), it is noted that if $K_m$ and $M_m$ commute then $A_{(0)}$ and $A_{(1)}$ commute. If one uses a finite element formulation to discretize in space for (\ref{eqn:heat}), then $M_m$ and \cred{$K_m$} are simultaneously diagonalizable, provided that a uniform grid is used. Alternatively, if finite difference methods are used for (\ref{eqn:heat}), then $M_m$, being exactly the identity matrix $I_m$ in this case, always commutes with $K_m$ (see \cite[Section 3.1.1]{doi:10.1137/16M1062016} for {a discussion}).

Therefore, throughout this work, the following assumptions on both $M_{m}$ and $K_{m}$ are made, which are compatible with \cite[Section 2]{doi:10.1137/20M1316354}:
\begin{enumerate}
	\item Both $M_{m}$ and $K_{m}$ are real symmetric positive definite (SPD);
	\item Both $M_{m}$ and $K_{m}$ are sparse, i.e., they have only $\mathcal{O}(m)$ nonzero entries.
	\item \cred{$M_{m}$ and $K_{m}$ commute}.
\end{enumerate}

As already observed the commutation between $M_{m}$ and $K_{m}$ is obvious if $M_m$ is the identity matrix or in the constant coefficient setting. When $a(x)$ is not constant or there is also a dependency on time, that is $a\equiv a(x,t)$, \cred{or the discretization method is different (finite elements, finite volumes, IgA) or graded meshes are used, the commutation is no longer true algebraically, but it holds asymptotically for large enough $m$ and it can be proved via multilevel GLT tools (see \cite{book-GLT-II}). More in detail, by assuming $m=m(n)$ with $m(n)$ going to infinity as $n$ tend to infinity and so that $1=o(m(n))$ and
\begin{equation}\label{hp-c}
c=\lim_{h\rightarrow 0^+} \frac{\tau}{h^2} >0,
\end{equation}
in the case of a variable coefficient $a\equiv a(x,t)$, we obtain that the matrix sequence $\{h^2 K_m\}_m$ is spectrally distributed as $p(\psi) a(t,x)$ on $(\psi,x)\in [0,\pi,\Omega]$, in the sense of Definition \ref{block-distr}, since $\{h^2 K_m\}_m$ a real symmetric GLT matrix sequence with GLT symbol $p(\psi) a(t,x)$. The latter statement holds since we exploit a canonical decomposition of $h^2 K_m$ as the product of a diagonal sampling sequence and of Toeplitz matrix sequence plus a zero-distributed matrix sequence (see again \cite[Ch. 6, Sect. 7.3]{book-GLT-II}). Hence, under the assumption that finite differences are used and by exploiting the tensor structure of $\mathcal{T}_H$ in (\ref{theta}), the global matrix sequence $\{\mathcal{T}_H\}_m$ will be again a GLT matrix sequence with GLT symbol
\[
c\theta \, p(\psi_2) a(t,x) + c(1-\theta)\, a(t,x)e^{-\mathbf{i} \psi_1 }.
\] 
Still the same distribution is present if condition (\ref{hp-c}) is violated and $c=\infty$, but with a different scaling factor and it applies to the matrix sequence  $\{\frac{h^2}{\tau}\mathcal{T}_H\}_m$.

However, in the present setting $m$ is considered fixed that is $c=0$ and hence also the condition $1=o(m(n))$ is not satisfied. Here a matrix-valued distribution is deduced. When assuming that both $n$ and $m$ are allowed to tend to infinity, in the way indicated in the previous lines, the situation changes and the related type of analysis represents a research line with various cases to be considered in the future.
}

	Instead of directly solving (\ref{theta}), we consider the permuted linear system $\mathcal{Y} \mathcal{T}_H \mathbf{u} = \mathcal{Y}\mathbf{f}_H$, where
\begin{equation}\label{eqn:main_system}
	\mathcal{Y} \mathcal{T}_H
	=
	\begin{bmatrix}
		&  &   & A_{(1)} & A_{(0)}\\
		&   & \iddots & \iddots &  \\
		&  A_{(1)}  & A_{(0)}  & &  \\
		A_{(1)}  & A_{(0)}    & & & \\
		A_{(0)} &   &  &  & \\
	\end{bmatrix}.
\end{equation}
	Note that $\mathcal{Y} = Y_n \otimes I_m$ with $Y_n \in \mathbb{R}^{n \times n}$ being the anti-identity matrix or the flip matrix that is
$[Y_n]_{j,k}=1$ if and only if $j+k=n+1$ and $[Y_n]_{j,k}=0$ otherwise. Clearly, $\mathcal{Y} \mathcal{T}_H$ is symmetric since $L_m$ is assumed symmetric. Thus, we can then deploy a symmetric Krylov subspace method such as the minimal residual (MINRES) method and design an effective preconditioner for $\mathcal{Y} \mathcal{T}_H$, whose convergence depends only on the eigenvalues.

In \cite{doi:10.1137/16M1062016}, a Strang block circulant preconditioner was proposed for $\mathcal{Y} \mathcal{T}_H$ and proved effective when $a(x)=1$. The main advantage of this iterative approach is that the inverse of the preconditioner can be computed using parallel fast Fourier transforms over a number of processors. However, the performance of such a preconditioner can be unsatisfactory, as will be shown in the numerical examples in Section \ref{sec:numerical}. We remark that such an unsatisfactory preconditioning effect of \corr{circulant matrices \cite{Strang1986,doi:10.1137/0909051}} for certain ill-conditioned Toeplitz systems was discussed in \cite{Hon_SC_Wathen}.

Thus, instead of using block circulant preconditioners, we propose in this work the following SPD preconditioner for $\mathcal{Y} \mathcal{T}_H$, that can be diagonalized by the discrete sine transform matrix
\begin{eqnarray}\label{def:matrix_P}
	\mathcal{P}_H=\sqrt{\begin{bmatrix}
			A_{(0)}^2 + A_{(1)}^2 & A_{(0)}A_{(1)}  &  &  & \\
			A_{(0)}A_{(1)}  & A_{(0)}^2 + A_{(1)}^2  &  \ddots & & \\
			&   \ddots & \ddots & \ddots &  \\
			&   & \ddots & \ddots & A_{(0)}A_{(1)}  \\
			&  &   & A_{(0)}A_{(1)}  & A_{(0)}^2 + A_{(1)}^2
	\end{bmatrix}},
\end{eqnarray}
which is constructed based on approximating the asymptotic eigenvalue distribution of $\mathcal{Y} \mathcal{T}_H$. Similar to the block circulant preconditioner, the computation of $\mathcal{P}_H^{-1}$ can also be parallelizable over $n$ processors. We refer readers to \cite{Bini1990} for a detailed discussion on such a parallel implementation.

We remind that in a pure Toeplitz setting the algebra of matrices diagonalized by the (real) orthogonal sine transform (i.e., the so-called $\tau$ algebra \cite{BC83}) was extensively used in the 90s (see \cite{D1,D2,DS,Slinear99} and references therein), while recently the approach based on $\tau$ preconditioning gained further attention in the context of approximated fractional differential equations (see \cite{BEV21,DMS16} and references therein). Here, we propose its use in a different setting where the Toeplitz structure is nonsymmetric. We recall that the $\tau$ preconditioning is superior when compared to the circulant preconditioning, when real symmetric Toeplitz-like structures are considered, as discussed and theoretically motivated in \cite{DS,Slinear99}.

In particular, we will show that the eigenvalues of the preconditioned matrix sequence $\{\mathcal{P}_H^{-1}\mathcal{Y} \mathcal{T}_H\}_n$ are clustered around $\pm 1$, which leads to fast convergence when MINRES is employed.

\cred{Yet, $\mathcal{P}_H$ requires fast diagonalizability of $M_{m}$ and $K_{m}$ in order to be efficiently implemented. When such diagonalizability is not available, we further propose the following preconditioner $\mathcal{P}_{\theta}$ as a modification of $\mathcal{P}_H$,
\begin{eqnarray}\label{def:matrix_mod_P}
	\mathcal{P}_{\theta} = \mathcal{H} \otimes M_{m} + \mathcal{H}_{\theta} \otimes \tau K_{m}
\end{eqnarray}
where
\begin{eqnarray*}
\mathcal{H} = \sqrt{\begin{bmatrix}
2 & -1 &  &  &  \\
-1 & 2 & -1 &  &  \\
  & \ddots & \ddots & \ddots &  \\
 &  & \ddots & \ddots & -1 \\
 &  &  & -1 & 2
\end{bmatrix}}
\end{eqnarray*}
and
\begin{eqnarray*}
\mathcal{H}_{\theta} = \sqrt{\begin{bmatrix}
\theta^2+(1-\theta)^2 & \theta(1-\theta) &   &  &  \\
\theta(1-\theta) & \theta^2+(1-\theta)^2 & \ddots &  &  \\
  & \ddots & \ddots & \ddots &  \\
 &  & \ddots & \ddots & \theta(1-\theta) \\
 &  &  & \theta(1-\theta) & \theta^2+(1-\theta)^2
\end{bmatrix}}. 
\end{eqnarray*} Its derivation and implementation will be discussed in Section \ref{subsec:MP}.
} 

In general, as mentioned in \cite[Chapter 6]{ANU:9672992}, the convergence study of preconditioning strategies for nonsymmetric problems is heuristic, since descriptive convergence bounds are usually not available for the generalized minimal residual (GMRES) method or any of the other applicable nonsymmetric Krylov subspace iterative methods.

The paper is organized as follows. In Section \ref{sec:prelim}, we review some preliminary results on block Toeplitz matrices and matrix-valued functions. We present our main results in Section \ref{sec:main} where our proposed preconditioner and its effectiveness are provided. An extension of our preconditioning method is given in Section \ref{sec:extension} {in the setting of high-order in time discretization schemes.} Numerical tests are given in Section \ref{sec:numerical} to support {the analysis of our proposed preconditioner. Finally in Section \ref{sec:conclusions} we draw conclusions and list a few open problems, including the challenging case of variable-coefficients when both the sizes $m$ and $n$ tend to infinity.}

	\section{Preliminaries}
\label{sec:prelim}

In the following subsections, we provide some useful background knowledge about block Toeplitz matrices and matrix functions.

	\subsection{Block Toeplitz matrices}

We let $L^1([-\pi,\pi],\mathbb{C}^{m \times m})$ be the Banach space of all matrix-valued functions that are Lebesgue integrable over $[-\pi,\pi]$. The $L^1$-norm induced by the trace norm over $\mathbb{C}^{m \times m}$ is
\[
\|f\|_{L^1} = \frac{1}{2\pi}\int_{-\pi}^{\pi} \|f(\psi)\|_{\text{tr}}\,d \psi < \infty,
\]
where $\|A_n\|_{\text{tr}}:=\sum_{j=1}^{n}\sigma_{j}(A_n)$ denotes the trace norm of $A_n\in \mathbb{C}^{n \times n}$. The block Toeplitz matrix generated by $f \in L^1([-\pi,\pi],\mathbb{C}^{m \times m})$ is denoted by $T_{(n,m)}[f]$, namely
\begin{eqnarray}\label{eqn:det_blockT}
	{T}_{(n,m)}[f]&=&
	\begin{bmatrix}
		A_{(0)} & A_{(-1)} & \cdots & A_{(-n+1)} \\
		A_{(1)} & \ddots & \ddots & \vdots \\
		\vdots & \ddots & \ddots  & A_{(-1)} \\
		A_{(n-1)} & \cdots & A_{(1)} & A_{(0) }
	\end{bmatrix} \in \mathbb{C}^{mn \times mn},
\end{eqnarray}
where the Fourier coefficients of $f$ are
\[ A_{(k)}=\frac{1}{2\pi} \int_{-\pi}^{\pi} f(\psi) e^{-\mathbf{i} k \psi } \,d\psi\in \mathbb{C}^{m \times m},\qquad k=0,\pm1,\pm2,\dots.
\] 
The function $f$ is called the \emph{generating function} or \emph{symbol} of $T_{({n},m)}[f]$. For thorough discussions on the related properties of (multilevel) block Toeplitz matrices, we refer the readers to \cite{MR2108963,Chan:1996:CGM:240441.240445,book-GLT-I} and references therein.

	Before discussing the asymptotic singular value or spectral distribution of $T_{(n,m)}[f]$ associated with $f$, which is crucial to develop our preconditioning theory, we introduce the following definition.
	
	Let $C_c(\mathbb{C})$ (or $C_c(\mathbb{R})$) be the space of complex-valued continuous functions defined on $\mathbb{C}$ (or $\mathbb{R}$) with bounded support.

	\begin{definition}\cite{Ferrari2019}\label{block-distr}
		Let $\{A_{(n,s)}\}_n$ be a sequence of $sn$-by-$sn$ \cred{matrices with $s$ fixed positive integer}.
		\begin{enumerate}[1.]
			
			\item We say that $\{A_{(n,s)}\}_n$ has an asymptotic singular value distribution described by an $s$-by-$s$ matrix-valued $f$ if
			\[
			\lim_{n\to \infty}\frac{1}{sn}\sum_{j=1}^{ns}F(\sigma_j(A_{(n,s)}))=\frac{1}{2\pi}\int_{-\pi}^{\pi}\frac{1}{s} \sum_{j=1}^{s}F(\sigma_j(f(x))) \,dx,\quad \forall F \in C_c(\mathbb{R})
			\]
			and we write $\{A_{(n,s)}\}_n \sim_{\sigma} f$.
			
			\item We say that $\{A_{(n,s)}\}_n$ has an asymptotic spectral distribution described by an $s$-by-$s$ matrix-valued $f$ if
			\[
			\lim_{n\to \infty}\frac{1}{sn}\sum_{j=1}^{np}F(\lambda_j(A_{(n,s)}))=\frac{1}{2\pi}\int_{-\pi}^{\pi}\frac{1}{s} \sum_{j=1}^{s}F(\lambda_j(f(x))) \,dx,\quad \forall F \in C_c(\mathbb{C})
			\]
			and we write $\{A_{(n,s)}\}_n \sim_{\lambda} f$.
		\end{enumerate}

	\end{definition}

	Before presenting our main preconditioning results, we introduce the following notation. Given $D \subset \mathbb{R}^k$ with $0<\mu_k(D)<\infty$, we define $\widetilde D_p$ as $D\bigcup D_p$, where $p\in  \mathbb{R}^k$ and $D_p=p+D$, with the constraint that $D$ and $D_p$ have non-intersecting interior part, i.e., $D^\circ \bigcap  D_p^\circ  =\emptyset$. In this way, we have $\mu_k(\widetilde D_p)=2\mu_k(D)$. Given any $g$ defined over $D$, we define $\psi_g$ over $\widetilde D_p$ in the following way
	\begin{equation}\label{def-psi}
		\psi_g(x)=\left\{
		\begin{array}{cc}
			g(x), & x\in D, \\
			-g(x-p), & x\in D_p, \ x \notin D.
		\end{array}
		\right.\,
	\end{equation}
	
	\begin{theorem}\cite[Theorem 3.4]{Ferrari2019}{}\label{thm:main_hon-ter}
		Suppose $f \in L^1([-\pi,\pi], \mathbb{C}^{m \times m})$ has Hermitian Fourier coefficients. Let $T_{(n,m)}[f]\in \mathbb{C}^{mn \times mn}$ be the block Toeplitz matrix generated by $f$ and let $Y_{(n,m)}=Y_n \otimes I_m\in \mathbb{R}^{mn \times mn}$. Then
		\[
		\{Y_{(n,m)}T_{(n,m)}[f]\}_n \sim_{\lambda}  \psi_{\vert f \vert}, \qquad \vert f \vert=(f f^*)^{1/2},
		\]
		over the domain  $\widetilde D$ with $D=[0,2\pi]$ and $p=-2\pi$, where $\psi_{ \vert f \vert}$ is defined in (\ref{def-psi}). That is
		{\cred{
		\[
		\lim_{n\to \infty} \frac{1}{mn}\sum_{j=1}^{mn}F(\lambda_j(Y_{(n,m)}T_{(n,m)}[f]))=
		\frac{1}{4m\pi}\int_{-\pi}^{\pi} 
		\sum_{j=1}^{m} F(\lambda_j(\vert f \vert(x)))+F(-\lambda_j(\vert f \vert(x))) \,dx,
		\]
		}}
		where $\lambda_j(\vert f \vert(x))$, $j=1,2,\dots,m$, are the eigenvalue functions of $\vert f \vert$.
	\end{theorem}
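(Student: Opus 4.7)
The plan is to reduce the claim to the classical singular value distribution of block Toeplitz matrices and then upgrade from absolute values to signed eigenvalues via a moment argument on even/odd test functions.

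First, I would observe that the Hermitian-Fourier-coefficients hypothesis forces $M_{(n,m)}:=Y_{(n,m)}T_{(n,m)}[f]$ to be Hermitian: its $(i,j)$-block equals $A_{(n+1-i-j)}$, which depends only on the sum $i+j$ and is Hermitian by assumption, hence $M_{(n,m)}=M_{(n,m)}^{*}$. A short block-index computation shows that the same hypothesis gives the flip identity $Y_{(n,m)}T_{(n,m)}[f]Y_{(n,m)}=T_{(n,m)}[f]^{*}$, and squaring $M_{(n,m)}$ therefore yields
\[
M_{(n,m)}^{2}\;=\;Y_{(n,m)}T_{(n,m)}[f]\,Y_{(n,m)}T_{(n,m)}[f]\;=\;T_{(n,m)}[f]^{*}T_{(n,m)}[f],
\]
so that $|\lambda_{j}(M_{(n,m)})|=\sigma_{j}(T_{(n,m)}[f])$ for every $j$.

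Next, I would invoke the classical block-Toeplitz singular value distribution $\{T_{(n,m)}[f]\}_{n}\sim_{\sigma}f$ (Tilli/Tyrtyshnikov--Zamarashkin), which by the identity above immediately yields the asymptotic distribution of the unsigned eigenvalues $|\lambda_{j}(M_{(n,m)})|$ as the eigenvalues of $|f|(x)=(ff^{*})^{1/2}(x)$. In particular, for every even test function $F_{e}\in C_{c}(\mathbb{R})$,
\[
\frac{1}{mn}\sum_{j=1}^{mn}F_{e}(\lambda_{j}(M_{(n,m)}))\;=\;\frac{1}{mn}\sum_{j=1}^{mn}F_{e}(\sigma_{j}(T_{(n,m)}[f]))
\]
converges to the singular-value integral, which by the $2\pi$-periodicity of $|f|$ matches the contribution of $\psi_{|f|}$ on the doubled domain $\widetilde D$, since $F_{e}$ does not see the sign flip on $D_{p}$.

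Finally, splitting any $F\in C_{c}(\mathbb{R})$ as $F=F_{e}+F_{o}$ with $F_{o}$ odd, and approximating $F_{o}$ by odd polynomials, the proof reduces to the moment identity
\[
\lim_{n\to\infty}\frac{1}{mn}\operatorname{tr}\bigl(M_{(n,m)}^{2k+1}\bigr)\;=\;0,\qquad k=0,1,2,\dots,
\]
mirroring the vanishing of the corresponding integrals of $\psi_{|f|}$, whose antisymmetry under $x\mapsto x-p$ annihilates odd powers. This odd-moment estimate is where I expect the main difficulty. The guiding idea is that $M_{(n,m)}^{2k+1}=Y_{(n,m)}T_{(n,m)}[f]\bigl(T_{(n,m)}[f]^{*}T_{(n,m)}[f]\bigr)^{k}$ carries a Hankel-type skew structure whose diagonal blocks are convolutional combinations of far-apart Fourier coefficients; using the uniform bound $\|A_{(k)}\|_{\mathrm{tr}}\le\|f\|_{L^{1}}$ together with the trace-norm Riemann--Lebesgue decay $\|A_{(k)}\|_{\mathrm{tr}}\to 0$ as $|k|\to\infty$, a careful index-counting argument should yield $\operatorname{tr}(M_{(n,m)}^{2k+1})=o(mn)$. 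Combining the even and odd cases then establishes $\{M_{(n,m)}\}_{n}\sim_{\lambda}\psi_{|f|}$ on $\widetilde D$.
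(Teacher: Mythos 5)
Your first half is sound and is in fact the standard starting point: with Hermitian coefficients $A_{(k)}$ the matrix $M_{(n,m)}=Y_{(n,m)}T_{(n,m)}[f]$ is Hermitian (its $(i,j)$ block is $A_{(n+1-i-j)}$), the flip identity $Y_{(n,m)}T_{(n,m)}[f]Y_{(n,m)}=T_{(n,m)}[f]^{*}$ gives $M_{(n,m)}^{2}=T_{(n,m)}[f]^{*}T_{(n,m)}[f]$, hence $\{|\lambda_j(M_{(n,m)})|\}=\{\sigma_j(T_{(n,m)}[f])\}$ as multisets, and Tilli's singular value result $\{T_{(n,m)}[f]\}_n\sim_{\sigma}f$ settles all even test functions. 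Note, for the record, that the present paper does not prove this theorem at all: it is imported verbatim from \cite[Theorem 3.4]{Ferrari2019}, so the comparison must be with that proof, which obtains the sign information structurally (up to a permutation and a low-rank plus small-norm correction, $Y_{(n,m)}T_{(n,m)}[f]$ is reduced to an antidiagonal $2\times 2$ block form whose eigenvalues are exactly $\pm$ the singular values of the off-diagonal block, first for trigonometric polynomial symbols and then for general $f\in L^1$ via the approximating-classes-of-sequences perturbation theory), not through moments.

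The genuine gap in your proposal is exactly the part that constitutes the theorem, namely that asymptotically half of the spectrum carries a negative sign. First, the reduction of an odd $F_o\in C_c(\mathbb{R})$ to the vanishing of odd moments is not legitimate at the stated level of generality: for $f\in L^1$ the norms $\|T_{(n,m)}[f]\|_2$ need not be uniformly bounded, so a compactly supported odd function cannot be uniformly approximated by odd polynomials on a set containing all eigenvalues (a polynomial is unbounded where outlying eigenvalues may live), the limiting quantities $\frac{1}{2\pi}\int_{-\pi}^{\pi}\frac{1}{m}\mathrm{tr}(|f|(x)^{2k+1})\,dx$ need not even be finite, and moment convergence alone does not yield the distributional limit without a tightness/truncation argument; none of this is addressed. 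Second, even granting $f\in L^{\infty}$, the key estimate $\mathrm{tr}(M_{(n,m)}^{2k+1})=o(mn)$ is only conjectured: the Riemann--Lebesgue decay of $\|A_{(k)}\|_{\mathrm{tr}}$ by itself does not obviously control a product of $2k+1$ Toeplitz/Hankel factors after summing over all diagonal blocks. A workable route is to prove it first for banded (trigonometric polynomial) symbols, where $Y_{(n,m)}T_{(n,m)}[f]\,(T_{(n,m)}[f]^{*}T_{(n,m)}[f])^{k}$ is a flip of a banded Toeplitz matrix plus a low-rank, norm-bounded correction, so that only $O(1)$ antidiagonal entries contribute to the trace, and then to transfer the estimate to general $f$ by an a.c.s.\ (low-rank plus small-norm) perturbation argument --- but at that point you have reconstructed the machinery of the cited proof, which bypasses moments altogether. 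As it stands, your argument proves only that the moduli of the eigenvalues distribute as $|f|$, not the claimed distribution $\psi_{|f|}$; to close it you must either supply the truncation/tightness analysis together with a genuine proof of the odd-trace estimate, or replace the moment step by the structural antidiagonal-block (or a.c.s.) argument of \cite{Ferrari2019}.
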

	
	As for the symmetrized matrix $ \mathcal{Y} \mathcal{T}_H$ given in (\ref{eqn:main_system}), we can see by \cite{MazzaPestana2018, Ferrari2019} that their eigenvalues are distributed as $\pm \vert g_{\theta} \vert$. By Theorem \ref{thm:main_hon-ter}, $ \mathcal{Y} \mathcal{T}_H$ is always symmetric indefinite when $n$ is sufficiently large, which gives grounds for the use of MINRES in this work.

		\subsection{Matrix functions}
	
	In this subsection, several useful results on functions of matrices are presented.
	
	If $F$ is analytic on a simply connected open region of the complex plane containing the interval $[-1, 1]$, there exist ellipses with foci in $-1$ and $1$ such that $F$ is analytic in their interiors. Let $\alpha > 1$ and $\beta >0$ be the half axes of such an ellipse with $\sqrt{\alpha^2 - \beta^2}=1$. Such an ellipse, denoted by ${E}_{\mathcal{X}}$, is completely specified once the number $\mathcal{X}:= \alpha + \beta$ is known.
	
	\begin{theorem}[Bernstein's theorem]\cite[Theorem 2.1]{Benzi1999}\label{thm:Bernstein}
		Let the function $F$ be analytic in the interior of the ellipse $\mathbb{E}_{\mathcal{X}}$ with $\mathcal{X}>1$ and continuous on $\mathbb{E}_{\mathcal{X}}$. In addition, suppose $F(x)$ is real for real $x$. Then, the best approximation error
		\[
		E_k(F):=\textrm{inf}\{\| E - p \|_{\infty} : \textrm{deg}(p) \leq k\} \leq \frac{2M(\mathcal{X})}{\mathcal{X}^k(\mathcal{X}-1)},
		\]
		where $\textrm{deg}(p)$ denotes the degree of the polynomial $p(x)$ and
		\[
		\| F - p \|_{\infty} = \max_{-1 \leq x \leq 1} \vert F(x)-p(x) \vert, \quad M(\mathcal{X})=\max_{x \in \mathbb{E}_{\mathcal{X}}}\{ \vert F(x) \vert \}.
		\]
	\end{theorem}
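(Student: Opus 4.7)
The plan is to establish the bound via a classical Chebyshev-series argument, exploiting the analyticity of $F$ on the Bernstein ellipse $\mathbb{E}_{\mathcal{X}}$ to obtain a geometric decay rate on the Chebyshev coefficients, and then using the truncated Chebyshev expansion as a concrete admissible polynomial in the definition of $E_k(F)$.

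First, I would introduce the Joukowski transformation $x=(w+w^{-1})/2$, which maps $|w|=\rho$ (with $\rho>1$) conformally onto an ellipse with foci at $\pm 1$ and half-axes $\alpha=(\rho+\rho^{-1})/2$, $\beta=(\rho-\rho^{-1})/2$. A short algebraic check shows $\alpha+\beta=\rho$ and $\alpha^2-\beta^2=1$, so the Bernstein ellipse $\mathbb{E}_{\mathcal{X}}$ corresponds precisely to $|w|=\mathcal{X}$. Under this change of variable $T_j(x)=\tfrac{1}{2}(w^{j}+w^{-j})$, which is the identity that drives the whole argument.

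Second, I would expand $F$ in its Chebyshev series
\[
F(x)=\tfrac{c_0}{2}+\sum_{j=1}^{\infty} c_j T_j(x), \qquad c_j=\frac{2}{\pi}\int_{-1}^{1}\frac{F(x)T_j(x)}{\sqrt{1-x^2}}\,dx,
\]
and rewrite each coefficient via $x=\cos\theta$, $w=e^{\mathbf{i}\theta}$ as a contour integral on $|w|=1$. Since $F$ is analytic in the interior of $\mathbb{E}_{\mathcal{X}}$ and real-valued on $[-1,1]$, the composition $F((w+w^{-1})/2)$ extends analytically from $|w|=1$ into the annulus $1<|w|<\mathcal{X}$. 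Applying Cauchy's theorem allows me to deform the contour from $|w|=1$ to $|w|=r$ with $r<\mathcal{X}$; on that enlarged contour the integrand is bounded by $M(\mathcal{X})r^{-j}$ up to a universal constant. Letting $r\uparrow\mathcal{X}$, using continuity of $F$ on $\mathbb{E}_{\mathcal{X}}$, yields the coefficient estimate $|c_j|\le 2M(\mathcal{X})\mathcal{X}^{-j}$ for $j\ge 1$.

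Third, the truncated series $p_k(x)=\tfrac{c_0}{2}+\sum_{j=1}^{k} c_j T_j(x)$ is an admissible polynomial of degree at most $k$. Since $|T_j(x)|\le 1$ on $[-1,1]$, summing a geometric tail gives
\[
E_k(F)\le \|F-p_k\|_{\infty}\le \sum_{j=k+1}^{\infty} |c_j|\le \sum_{j=k+1}^{\infty} \frac{2M(\mathcal{X})}{\mathcal{X}^{j}}=\frac{2M(\mathcal{X})}{\mathcal{X}^{k}(\mathcal{X}-1)},
\]
which is exactly the claimed bound. The main obstacle is not the geometric summation but the rigorous derivation of the coefficient estimate: one must justify the analytic extension of $F\bigl((w+w^{-1})/2\bigr)$ to the annulus (where the assumption that $F$ is real on $[-1,1]$ ensures the two branches of the inverse Joukowski map glue consistently) and justify the contour deformation up to the boundary of $\mathbb{E}_{\mathcal{X}}$, where only continuity of $F$ is assumed; this last step is handled by working with inner ellipses $\mathbb{E}_{\mathcal{X}-\varepsilon}$ and passing to the limit $\varepsilon\downarrow 0$ using the uniform bound $M(\mathcal{X})$ from continuity on the closed ellipse.
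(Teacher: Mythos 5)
This statement is quoted in the paper as a known result from Benzi and Golub (their Theorem 2.1, itself taken from classical approximation theory); the paper supplies no proof of its own. Your argument is the standard Chebyshev-series proof of Bernstein's theorem and it is correct: the Joukowski identification $\mathcal{X}=\alpha+\beta=\rho$, the coefficient bound $|c_j|\le 2M(\mathcal{X})\mathcal{X}^{-j}$ obtained by contour deformation (with the limit through inner ellipses to use only continuity on the closed ellipse, via the maximum principle), and the geometric tail summation giving $2M(\mathcal{X})/(\mathcal{X}^k(\mathcal{X}-1))$ are exactly the ingredients of the classical proof cited through \cite{Benzi1999}. One small remark: the hypothesis that $F$ is real on the real axis is not actually needed for the error bound itself; its role in the source is to guarantee that the Chebyshev coefficients, and hence the near-best polynomial $p_k$, are real, which is what the later matrix-function arguments require — your suggestion that it is needed to glue the analytic extension across the branches of the inverse Joukowski map overstates its function, since the extension argument works for complex-valued $F$ as well.
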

	
	Let $A_n$ be an $n \times n$ symmetric matrix and let $[\lambda_{\min}, \lambda_{\max}]$ be the smallest interval containing $\sigma(A_n)$. If we introduce the linear affine
	function \[\psi(\lambda) = \frac{2 \lambda - (\lambda_{\min} + \lambda_{\max})}{\lambda_{\max} - \lambda_{\min}},\] then $\psi([\lambda_{\min},\lambda_{\max}])=[-1,1]$ and therefore the spectrum of the symmetric matrix \[B_n:=\psi(A_n)=\frac{2}{\lambda_{\max} - \lambda_{\min}}A_n - \frac{\lambda_{\min} + \lambda_{\max}}{\lambda_{\max} - \lambda_{\min}}I_n\] is contained in $[-1,1]$. Given a function $f$ analytic on a simply connected region containing $[\lambda_{\min}, \lambda_{\max}]$ and such that $f(\lambda)$ is real when $\lambda$ is real, the function $F = f \circ \psi^{-1}$ satisfies the assumptions of Bernstein's theorem.
	
	In the special case where $A_n$ is SPD and $f(x) = x^{-1/2}$ that are of our interest in this work, we apply Bernstein's result to the function
	\begin{equation}\label{def:function_F}
		F(x) = \frac{1}{ \sqrt{ \frac{(b - a)}{2}x  + \frac{ a+ b}{2} } },
	\end{equation}
	where $a=\lambda_{\min}(A_n), b=\lambda_{\max}(A_n)$, and $1 < \mathcal{X} < \frac{\sqrt{\kappa}+1}{\sqrt{\kappa}-1}$ with the spectral condition number of $A_n$ being $\kappa={b}/{a}.$

	\section{Main results}\label{sec:main}
	
	In this section, we provide our results on the proposed preconditioner $\mathcal{P}$ defined by (\ref{def:matrix_P}), whose design is based on the following spectral symbol
	\begin{eqnarray}\nonumber
		\vert g_{\theta}\vert^2  &=&  A_{(0)}^2 + A_{(1)}^2 + A_{(0)}A_{(1)} e^{-\mathbf{i}x}+A_{(0)}A_{(1)} e^{\mathbf{i}x}\\\label{eqn:symbol_p}
		&=&A_{(0)}^2 + A_{(1)}^2 + 2A_{(0)}A_{(1)}\cos{(x)}.
	\end{eqnarray} Due to the fact that $\mathcal{P}=\sqrt{T_{(n,m)}[\vert g_{\theta}\vert^2]}$, it can be regarded as an optimal preconditioner for $\mathcal{Y}\mathcal{T}$, in the sense of the spectral distribution theory developed in the work \cite{Ferrari2019}. We discuss mainly MINRES combined with our proposed preconditioner, and related issues such as implementations and convergence analysis are given as well.

	\subsection{Preconditioning for heat equations}

In this section, we provide a MINRES approach for $\mathcal{Y}\mathcal{T}_H\mathbf{u}=\mathcal{Y}\mathbf{f}_H$.

\subsubsection{Implementations}\label{sebsebsec:imple_MINRES}

We first discuss the fast computation of $\mathcal{Y}\mathcal{T}_H\mathbf{v}$ for any given vector $\mathbf{v}$. Since both $M_m$ and $K_m$ are sparse matrices, $\mathcal{T}_H$ is sparse. Hence, computing the matrix-vector product $\mathcal{T}_H\mathbf{v}$ only requires linear complexity of $\mathcal{O}(mn)$. Finally, computing $\mathcal{Y}\mathcal{T}_H\mathbf{v}$ needs the same complexity since $\mathcal{Y}$ simply imposes a reordering on the vector $\mathcal{T}_H\mathbf{v}$.  Alternatively, due to the fact that $\mathcal{T}_H$ itself is a block Toeplitz matrix, it is well-known that the product $\mathcal{Y}\mathcal{T}_H\mathbf{v}$ can be computed in $\mathcal{O}(mn \log{n})$ operations using FFTs and the related storage is of $\mathcal{O}(mn)$, where $m$ can be regarded as a fixed constant and $n$ is the effective dimensional parameter.

We first indicate that $\mathcal{P}$ has the following decomposition
\begin{align*}
	\mathcal{P}_H=  \sqrt{T_{(n,m)}[\vert g_{\theta}\vert^2]} &= \sqrt{ I_n\otimes (A_{(0)}^2 + A_{(1)}^2) + P_n \otimes 2(A_{(0)}A_{(1)}) }
\end{align*}
where
\begin{equation}\label{eqn:matrix_tri_p}
	P_n=\begin{bmatrix}
		0 & \frac{1}{2}  &  &  & \\
		\frac{1}{2}  & 0  & \frac{1}{2} & & \\
		&   \ddots & \ddots & \ddots &  \\
		&   & \ddots & \ddots & \frac{1}{2} \\
		&  &   & \frac{1}{2} & 0
	\end{bmatrix}
\end{equation}
is a tridiagonal Toeplitz matrix which has the eigendecomposition $P_n=S_n D_n S_n$ with $S_n=\sqrt{\frac{2}{n+1}}\Big[\sin{(\frac{ij \pi}{n+1})}\Big]^{n}_{i,j=1} \in \mathbb{R}^{n \times n}$ being the symmetric discrete sine transform matrix. Hence,
\begin{align*}
	\mathcal{P}_H&=  \sqrt{  I_n\otimes U_m( \Lambda^2_{(0)} + \Lambda^2_{(1)} )U_m^T + P_n \otimes (U_m 2\Lambda_{(0)}\Lambda_{(1)} U_m^T)} \\
	&=\sqrt{(S_n \otimes U_m)\Big(  I_n \otimes (\Lambda^2_{(0)} + \Lambda^2_{(1)} ) + D_n \otimes  2\Lambda_{(0)}\Lambda_{(1)}  \Big)(S_n \otimes U_m)^T}\\
	&=(S_n \otimes U_m)\sqrt{   I_n \otimes (\Lambda^2_{(0)} + \Lambda^2_{(1)} ) + D_n \otimes  2\Lambda_{(0)}\Lambda_{(1)} }(S_n \otimes U_m)^T.
\end{align*}

The product $\mathbf{z} = \mathcal{P}_H^{-1}\mathbf{v}$ can be computed via the following three-step procedures:

\begin{enumerate}[1.]
	\item $\textrm{Compute}~\widetilde{\mathbf{v}} =  (S_n \otimes U_m^T)\mathbf{v};$
	\item $\textrm{Compute}~\widetilde{\mathbf{z}} =\Big(\sqrt{   I_n \otimes (\Lambda^2_{(0)} + \Lambda^2_{(1)} ) + D_n \otimes  2\Lambda_{(0)}\Lambda_{(1)} } \Big)^{-1} \widetilde{\mathbf{v}};$
	\item $\textrm{Compute}~{\mathbf{z}} =  (S_n \otimes{ U_m})\widetilde{\mathbf{z}}$.
\end{enumerate}

When both $M_m$ and $K_m$ are diagonalizable by $U_m = S_m$, which holds true when the spatial discretization is a finite difference/element method with a uniform square grid and a constant diffusion coefficient function $a(x)$, the matrix-vector product $\mathbf{z} = \mathcal{P}_H^{-1}\mathbf{v}$ can be computed efficiently in $\mathcal{O}(nm (\log{m} + \log{n}))$ operations using fast sine transforms with a storage is of order $\mathcal{O}(nm)$.

{When the diffusion coefficients change corresponding to space, the matrix $K_m$ is no longer a Toeplitz matrix and is not diagonalizable by $S_m$. Considering the one-dimensional space case, let the spatial grid size $h=\frac{1}{m+1}$, we have the following coefficient matrix
	
	\begin{equation}\label{eqn:matrix_Km_va}
		K_m=\frac{1}{h^2}\begin{bmatrix}
			a_{\frac{1}{2}}+a_{\frac{3}{2}} & -a_{\frac{3}{2}} &  &  & \\
			-a_{\frac{3}{2}}& a_{\frac{3}{2}}+a_{\frac{5}{2}}  & -a_{\frac{5}{2}} & & \\
			&   \ddots & \ddots & \ddots &  \\
			&   & \ddots & \ddots &-a_{\frac{2m-1}{2}} \\
			&  &   & -a_{\frac{2m-1}{2}}& a_{\frac{2m-1}{2}}+a_{\frac{2m+1}{2}}
		\end{bmatrix},
	\end{equation}
	where  $a_j=x_{\textrm min}+jh$ for all $j\in [0,m]$. In this case, we can construct a preconditioner for $K_m$ by averaging is entries along the three \cred{diagonals} to obtain a tridiagonal symmetric Toeplitz matrix. Clearly, such a new preconditioner can be fast diagonalized by $S_m$. Thus, the overall preconditioner $ \mathcal{P}_H$ can be diagonalized as well.
}

\subsubsection{Eigenvalue analysis of $\mathcal{P}_H^{-1}\mathcal{Y}\mathcal{T}_H$}
{
	The following proposition guarantees the effectiveness of {\color{black} $\left(\sqrt{\mathcal{T}_H^{\top}\mathcal{T}_H}\right)^{-1}$} as a preconditioner for $\mathcal{Y}\mathcal{T}_H$, showing that the eigenvalues of {\color{black} $\left(\sqrt{\mathcal{T}_H^{\top}\mathcal{T}_H}\right)^{-1}\mathcal{Y} \mathcal{T}_H$} are clustered around 
	$\pm 1$ which ensures fast convergence of MINRES.

{\color{black}
\begin{proposition}\label{thm:absC}
	Let $\mathcal{T}_H $  and $\mathcal{Y} \mathcal{T}_H $ be defined in (\ref{theta}) and (\ref{eqn:main_system}), respectively. Then $$\left(\sqrt{\mathcal{T}_H^{\top}\mathcal{T}_H}\right)^{-1}\mathcal{Y} \mathcal{T}_H=\mathcal{Q}_0, $$ where $\mathcal{Q}_0$ is both symmetric and orthogonal matrix, and have only $-1$ and $1$ as eigenvalues.
\end{proposition}
\begin{proof}
	Since the eigendecomposition of the symmetric matrix $\mathcal{Y} \mathcal{T}_H$ is $\mathcal{Y} \mathcal{T}_H=E^{\top}\mathcal{Q} E$, we can have
	\begin{eqnarray*}
		\left(\sqrt{\mathcal{T}_H^{\top}\mathcal{T}_H}\right)^{-1}\mathcal{Y} \mathcal{T}_H&=&\left(\sqrt{(\mathcal{Y}\mathcal{T}_H)^{\top}\mathcal{Y}\mathcal{T}_H}\right)^{-1}\mathcal{Y} \mathcal{T}_H\\
		&=&\left(\sqrt{(\mathcal{Y}\mathcal{T}_H)^{2}}\right)^{-1}\mathcal{Y} \mathcal{T}_H\\
			&=&E^{\top}(\mathcal{Q} ^{2})^{-1/2}E E^{\top}\mathcal{Q} E\\
			&=&E^{\top}\hat{\mathcal{Q} } E\\
			&=&\mathcal{Q}_0 
		\end{eqnarray*}
	where $\hat{\mathcal{Q} }$ is a diagonal matrix whose entries are either $-1$ or $1$. Therefore, $\mathcal{Q}_0 $ is symmetric and orthogonal, and hence its eigenvalues are only $-1$ and $1$.
	\end{proof}}

Now, we turn our focus to the following lemma and proposition, which will be helpful to \cred{show} our main result.

{\color{black}
	\begin{lemma}\label{lemma:rankC_P}
	Let $\mathcal{T}_H, \mathcal{P}_H \in \mathbb{R}^{mn \times mn} $ be defined in (\ref{theta}) and (\ref{def:matrix_P}), respectively. Then,
	\[
	\textrm{rank}((\mathcal{T}_H^{\top}\mathcal{T}_H)^{K}-\mathcal{P}_H^{2K}) \leq Km,
	\]
	for some positive integer $K$ provided that $n>Km$.
\end{lemma}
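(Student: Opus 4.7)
The plan is to reduce the statement to a telescoping rank argument after first identifying exactly how $\mathcal{C}_H^2$ and $\mathcal{P}_H^2$ differ.

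First I would compare $\mathcal{P}_H^2$ and $\mathcal{C}_H^2$ directly from the definitions in (\ref{def:matrix_P}) and (\ref{def:matrix_C}). Squaring removes the outer square root, so $\mathcal{P}_H^2$ is exactly the block tridiagonal Toeplitz matrix $T_{(n,m)}[|g_{\theta}|^2]$ with diagonal blocks $A_{(0)}^2+A_{(1)}^2$ and sub/super-diagonal blocks $A_{(0)}A_{(1)}$, while $\mathcal{C}_H^2$ is its block circulant counterpart, i.e.\ the same matrix modified only at the $(1,n)$- and $(n,1)$-block positions, where an additional $A_{(0)}A_{(1)}$ appears. Hence the difference
\[
E := \mathcal{C}_H^2 - \mathcal{P}_H^2
\]
has nonzero entries confined to two $m\times m$ corner blocks, which gives the basic rank bound $\mathrm{rank}(E)\leq 2m$ (assuming $n>2$ so that the two corner blocks do not overlap with the tridiagonal band; this is where a mild condition on $n$ is needed).

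Next I would apply the standard telescoping identity
\[
A^{K}-B^{K}=\sum_{j=0}^{K-1}A^{\,K-1-j}(A-B)B^{j}
\]
with $A=\mathcal{C}_H^{2}$ and $B=\mathcal{P}_H^{2}$, noting that $\mathcal{C}_H^{2K}=(\mathcal{C}_H^2)^K$ and $\mathcal{P}_H^{2K}=(\mathcal{P}_H^2)^K$. Since $\mathrm{rank}(XYZ)\leq\mathrm{rank}(Y)$ and rank is subadditive, each of the $K$ summands contributes at most $\mathrm{rank}(E)\leq 2m$ to the total rank, yielding
\[
\mathrm{rank}(\mathcal{C}_H^{2K}-\mathcal{P}_H^{2K})\leq \sum_{j=0}^{K-1}\mathrm{rank}\bigl(\mathcal{C}_H^{2(K-1-j)} E\,\mathcal{P}_H^{2j}\bigr)\leq 2Km.
\]
The hypothesis $n>2Km$ is only used at the end to guarantee that the bound is nontrivial, i.e., that $2Km<mn$, so that the conclusion is informative rather than automatic from the matrix size.

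There is no real obstacle here beyond correctly identifying the structure of $\mathcal{C}_H^2$ versus $\mathcal{P}_H^2$; the only subtlety is the clean observation that squaring the $\tau$-algebra and circulant preconditioners strips away the matrix square root, after which the rest is a one-line telescoping identity combined with the elementary subadditivity and multiplicative invariance of rank.
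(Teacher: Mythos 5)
Your proof is correct. Squaring strips the outer square roots, so $\mathcal{P}_H^2$ is the block tridiagonal Toeplitz matrix $T_{(n,m)}[\vert g_{\theta}\vert^2]$ and $\mathcal{C}_H^2$ its circulant counterpart, their difference $E=\mathcal{C}_H^2-\mathcal{P}_H^2$ is supported on the two $m\times m$ corner blocks (each equal to $A_{(0)}A_{(1)}$), and the telescoping identity combined with $\mathrm{rank}(XYZ)\le \mathrm{rank}(Y)$ and subadditivity of rank yields the bound $2Km$. The paper uses the same telescoping decomposition but finishes differently: rather than invoking rank inequalities, it tracks the sparsity pattern, showing that each term $\mathcal{C}_H^{2n_\alpha}(\mathcal{C}_H^2-\mathcal{P}_H^2)\mathcal{P}_H^{2n_\beta}$ has its nonzero entries confined to four corner blocks of size $(n_\alpha+1)m\times(n_\beta+1)m$, so that the whole sum is again a corner matrix with $Km\times Km$ corners provided $n>2Km$, whence the rank bound. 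Your route is more elementary and, as you observe, does not actually need $n>2Km$ (in the paper that hypothesis keeps the corner blocks from overlapping). What the paper's structural bookkeeping buys becomes visible in Proposition \ref{Prepo:mainCP}: there one must bound the rank of $\mathcal{R}_1=\sum_{i=0}^{K}a_i(\mathcal{C}_H^{2i}-\mathcal{P}_H^{2i})$, and the paper does so by noting that all the summands share the same corner support, so the sum still has rank at most $2Km$; a purely term-by-term rank-subadditivity argument of your kind would only give a bound of order $K^2 m$ there. So your argument fully proves the lemma as stated, but if it is to feed into the subsequent proposition you should also record the corner-support structure of $\mathcal{C}_H^{2K}-\mathcal{P}_H^{2K}$, not merely its rank.
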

\begin{proof}
	First, we observe that
	\begin{equation*}
		\mathcal{T}_H^{\top}\mathcal{T}_H-\mathcal{P}_H^2 =
		\begin{bmatrix}
			 &  &  & & & &  &  & \\
		 &  &  & & & &  &  & \\
			 &  &  & &  &&  &  & \\
			&  &  & &  &&  & & \\
			 &  &  & &  &&  &  & \\
			 &  & & & & &  &  & \\
			 & &  & & & &  &  &  -A_{(1)}^2\\
		\end{bmatrix}.
	\end{equation*}	
	Second, exploiting the simple structures of both $\mathcal{T}_H^{\top}\mathcal{T}_H$ and $\mathcal{P}_H^2$, we have by direct computations
	\begin{equation}\label{eqn:structure_CP}
		(\mathcal{T}_H^{\top}\mathcal{T}_H)^{ n_\alpha} (\mathcal{T}_H^{\top}\mathcal{T}_H-\mathcal{P}_H^2)\mathcal{P}_H^{2 n_\beta} =
		\begin{bmatrix}
			 &  &  & & & &  &  & \\
		 &  &  & & & &  &  & \\
			 &  &  & &  &&  &  & \\
			&  &  & &  &&  & & \\
			 &  &  & &  && * & \cdots & *\\
			 &  & & & & & \vdots &  & \vdots\\
			 & &  & & & & * & \cdots & *\\
		\end{bmatrix}
	\end{equation}
	for integer values $n_\alpha$ and $n_\beta$, where $*$ represents a nonzero entry. Namely, $(\mathcal{T}_H^{\top}\mathcal{T}_H)^{ n_\alpha} (\mathcal{T}_H^{\top}\mathcal{T}_H-\mathcal{P}_H^2)\mathcal{P}_H^{2 n_\beta} $ is a block matrix with one block in its Southeast corner, and the block is of size $(n_\alpha+1)m \times (n_\beta +1)m$. Thus,
	\begin{eqnarray*}
		(\mathcal{T}_H^{\top}\mathcal{T}_H)^{K}-\mathcal{P}_H^{2K} &=& \sum_{i=0}^{K-1} ((\mathcal{T}_H^{\top}\mathcal{T}_H)^{K-i}\mathcal{P}_H^{2i} - (\mathcal{T}_H^{\top}\mathcal{T}_H)^{K-i-1}\mathcal{P}_H^{2i+2})\\
		&=& \sum_{i=0}^{K-1} (\mathcal{T}_H^{\top}\mathcal{T}_H)^{K-i-1}(\mathcal{T}_H^{\top}\mathcal{T}_H - \mathcal{P}_H^2)\mathcal{P}_H^{2i}
	\end{eqnarray*}
	is also a block matrix with one block in its Southeast corner, which is of size $Km \times Km$ provided that $n>Km$. Hence, we have $\textrm{rank}((\mathcal{T}_H^{\top}\mathcal{T}_H)^{K}-\mathcal{P}_H^{2K}) \leq Km$.
\end{proof}
}
\begin{remark}\normalfont
	The computational lemma used in the proof of Lemma \ref{lemma:rankC_P} was first given in \cite[Lemma 3.11]{doi:10.1137/080720280}.
\end{remark}

{\color{black}
\begin{proposition}\label{Prepo:mainCP}
	Let $\mathcal{T}_H, \mathcal{P}_H \in \mathbb{R}^{mn \times mn} $ be defined in (\ref{theta}) and (\ref{def:matrix_P}), respectively. Then for any $\epsilon >0$ there exists an integer $K$ such that for all $n>Km$
	\[
	\left(\sqrt{\mathcal{T}_H^{\top}\mathcal{T}_H}\right)^{-1}-\mathcal{P}_H^{-1} = \mathcal{E} + \mathcal{R}_1,
	\]
	where $\|\mathcal{E}\|_2 \leq \epsilon$ and $\textrm{rank} (\mathcal{R}_1) \leq Km $. 
\end{proposition}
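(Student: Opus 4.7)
The plan is to approximate the matrix function $x\mapsto x^{-1/2}$ by a polynomial of moderate degree $K$ via Bernstein's theorem, apply this polynomial to $\mathcal{P}_H^2$ and $\mathcal{C}_H^2$, and then combine the uniform approximation error (contributing to $\mathcal{E}$) with the low-rank structure of $\mathcal{C}_H^{2k}-\mathcal{P}_H^{2k}$ from Lemma \ref{lemma:rankC_P} (contributing to $\mathcal{R}_1$).

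First I would fix a common interval $[a,b]\subset(0,\infty)$ containing the spectra of both $\mathcal{P}_H^2$ and $\mathcal{C}_H^2$ uniformly in $n$. Since $\mathcal{P}_H^2=T_{(n,m)}[|g_\theta|^2]$ and $\mathcal{C}_H^2$ is a block circulant built from the same blocks, standard extremal eigenvalue bounds for block Toeplitz/circulant matrices generated by a uniformly positive definite symbol (which $|g_\theta|^2$ is, under the assumptions on $M_m,K_m$) give $0<a\le\lambda_{\min}(\cdot)\le\lambda_{\max}(\cdot)\le b$, with $a,b$ independent of $n$. Then $F(x)=x^{-1/2}$ is analytic in an open complex neighborhood of $[a,b]$, so by Theorem \ref{thm:Bernstein} (after the affine rescaling in (\ref{def:function_F})) there exists a polynomial $p_K$ of degree $K$ satisfying
\[
\max_{x\in[a,b]}|x^{-1/2}-p_K(x)|\le \frac{\epsilon}{2},
\]
with an error that decays geometrically in $K$, so any prescribed $\epsilon>0$ is achievable with $K=\mathcal{O}(\log(1/\epsilon))$.

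Next, by the spectral theorem applied to the SPD matrices $\mathcal{P}_H^2$ and $\mathcal{C}_H^2$, this uniform approximation transfers to operator norm:
\[
\bigl\|\mathcal{P}_H^{-1}-p_K(\mathcal{P}_H^2)\bigr\|_2\le \frac{\epsilon}{2},\qquad \bigl\|\mathcal{C}_H^{-1}-p_K(\mathcal{C}_H^2)\bigr\|_2\le \frac{\epsilon}{2}.
\]
I would then define
\[
\mathcal{E}:=\bigl(\mathcal{C}_H^{-1}-p_K(\mathcal{C}_H^2)\bigr)-\bigl(\mathcal{P}_H^{-1}-p_K(\mathcal{P}_H^2)\bigr),\qquad \mathcal{R}_1:=p_K(\mathcal{C}_H^2)-p_K(\mathcal{P}_H^2),
\]
so that $\mathcal{C}_H^{-1}-\mathcal{P}_H^{-1}=\mathcal{E}+\mathcal{R}_1$ and $\|\mathcal{E}\|_2\le\epsilon$ by the triangle inequality.

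Finally, for the rank bound on $\mathcal{R}_1$, writing $p_K(x)=\sum_{k=0}^{K}c_k x^k$ yields $\mathcal{R}_1=\sum_{k=1}^{K}c_k\bigl(\mathcal{C}_H^{2k}-\mathcal{P}_H^{2k}\bigr)$. The computation displayed in (\ref{eqn:structure_CP}) in the proof of Lemma \ref{lemma:rankC_P} shows that each difference $\mathcal{C}_H^{2k}-\mathcal{P}_H^{2k}$, for $1\le k\le K$ and $n>2Km$, is supported only in its top $km$ and bottom $km$ rows, which are contained in the top $Km$ and bottom $Km$ rows. Consequently the linear combination $\mathcal{R}_1$ is also supported in these $2Km$ rows, giving $\mathrm{rank}(\mathcal{R}_1)\le 2Km$. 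The main obstacle is the uniform (in $n$) spectral confinement needed in the first step: one must ensure $[a,b]$ does not shrink or blow up, since otherwise the required $K$ from Bernstein would depend on $n$ and the decomposition would not serve its purpose in the subsequent clustering argument for $\mathcal{P}_H^{-1}\mathcal{Y}\mathcal{T}_H$.
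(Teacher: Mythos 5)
Your proposal is correct and follows essentially the same route as the paper's proof: a Bernstein polynomial approximation $p_K$ of $x^{-1/2}$ applied to the SPD matrices $\mathcal{C}_H^2$ and $\mathcal{P}_H^2$, with $\mathcal{E}$ collecting the two norm-small approximation errors and $\mathcal{R}_1=p_K(\mathcal{C}_H^2)-p_K(\mathcal{P}_H^2)$ handled by the corner-block structure of Lemma \ref{lemma:rankC_P}. Your additional remark on fixing a spectral interval $[a,b]$ uniform in $n$ (so that $K$ does not depend on $n$) is a point the paper passes over quickly, and is indeed justified for fixed $m$ since the spectra of both $T_{(n,m)}[\vert g_\theta\vert^2]$ and the circulant counterpart are confined to the range of the eigenvalue functions of the positive definite symbol $\vert g_\theta\vert^2$.
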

}

\begin{proof}
	Let $f(x)=x^{-1/2}$ and $F(x)$ be defined in equation (\ref{def:function_F}). By Theorem \ref{thm:Bernstein}, there exists a polynomial $p_K$ with degree less than or equal to $K$ such that
	{\color{black}
	\begin{align*}
		\|  \left(\sqrt{\mathcal{T}_H^{\top}\mathcal{T}_H}\right)^{-1} - p_K (\mathcal{T}_H^{\top}\mathcal{T}_H)\|_2 &= \vert (\mathcal{T}_H^{\top}\mathcal{T}_H)^{-\frac{1}{2}} - p_K (\mathcal{T}_H^{\top}\mathcal{T}_H)\|_2 \\
		 &= \max_{x \in \sigma(\mathcal{T}_H^{\top}\mathcal{T}_H)} \vert F(x) - p_K(x) \vert\\
		& \leq \| F -p_K(x)\|_{\infty} \\
		&\leq \frac{2M (\mathcal{X}_{\mathcal{T}_H^{\top}\mathcal{T}_H})}{\mathcal{X}_{\mathcal{T}_H^{\top}\mathcal{T}_H}-1}\cdot \frac{1}{\mathcal{X}_{\mathcal{T}_H^{\top}\mathcal{T}_H}^K},
	\end{align*}}
	\begin{align*}
		\| \mathcal{P}_H^{-1} - p_K (\mathcal{P}_H^2)\|_2 &= \| (\mathcal{P}_H^2)^{-\frac{1}{2}} - p_K (\mathcal{P}_H^2)\|_2 \\ &= \max_{x \in \sigma(\mathcal{P}_H^2)} \vert F(x) - p_K(x) \vert \\
		& \leq \| F -p_K(x)\|_{\infty} \\
		&\leq \frac{2M (\mathcal{X}_{\mathcal{P}_H^2})}{\mathcal{X}_{\mathcal{P}_H^2}-1}\cdot \frac{1}{\mathcal{X}_{\mathcal{P}_H^2}^K},
	\end{align*}
	where 
	\[ {\color{black}
	1 < \mathcal{X}_{\mathcal{T}_H^{\top}\mathcal{T}_H} < \frac{\sqrt{\kappa_{\mathcal{T}_H^{\top}\mathcal{T}_H}}+1}{\sqrt{\kappa_{\mathcal{T}_H^{\top}\mathcal{T}_H}}-1}},\qquad 1 < \mathcal{X}_{\mathcal{P}_H^2} < \frac{\sqrt{\kappa_{\mathcal{P}_H^2}}+1}{\sqrt{\kappa_{\mathcal{P}_H^2}}-1},
	\]
	and {\color{black}$\kappa_{\mathcal{T}_H^{\top}\mathcal{T}_H}$} and $\kappa_{\mathcal{P}_H^2}$ are the condition numbers of {\color{black}$\mathcal{T}_H^{\top}\mathcal{T}_H$} and $\mathcal{P}_H^2$, respectively. Thus, for any $\epsilon >0$ there exists an integer $K$ such that
	\[{\color{black}
	\| \left(\sqrt{\mathcal{T}_H^{\top}\mathcal{T}_H}\right)^{-1} - p_K (\mathcal{T}_H^{\top}\mathcal{T}_H)\|_2 \leq \epsilon }\quad \textrm{and} \quad \| \mathcal{P}_H^{-1} - p_K (\mathcal{P}_H^2)\|_2 \leq \epsilon.
	\]
	
	Also, we have
	{\color{black}
	\begin{eqnarray*}
		p_K(\mathcal{T}_H^{\top}\mathcal{T}_H) - p_K(\mathcal{P}_H^2) =\underbrace{\sum_{i=0}^{K} a_i ((\mathcal{T}_H^{\top}\mathcal{T}_H)^{i}-\mathcal{P}_H^{2i})}_{=:\mathcal{R}_1}.
	\end{eqnarray*}
}

	By Lemma \ref{lemma:rankC_P}, we know that $\mathcal{R}_1$ has the same sparsity structure as that of ${\color{black}(\mathcal{T}_H^{\top}\mathcal{T}_H)^{K}}-\mathcal{P}_H^{2K}$. Consequently, we deduce $\textrm{rank}(\mathcal{R}_1) \leq {\color{black}Km}$.

	We then obtain
	{\color{black}
	\begin{eqnarray*}
		&&\left(\sqrt{\mathcal{T}_H^{\top}\mathcal{T}_H}\right)^{-1} - \mathcal{P}_H^{-1}\\
		&&= \underbrace{(\mathcal{T}_H^{\top}\mathcal{T}_H)^{-\frac{1}{2}} - p_K(\mathcal{T}_H^{\top}\mathcal{T}_H) + p_K(\mathcal{P}_H^2) - \mathcal{P}_H^{-1}}_{\corr{=:\mathcal{E}}} + \underbrace{p_K(\mathcal{T}_H^{\top}\mathcal{T}_H) - p_K(\mathcal{P}_H^2)}_{\corr{=:}\mathcal{R}_1},
	\end{eqnarray*}}
	where $\|\mathcal{E}\|_2 \leq 2\epsilon$ and {\color{black} $\textrm{rank}(\mathcal{R}_1) \leq Km $}. Therefore, the proof is concluded.
	\end{proof}

\begin{theorem}\label{thm:absP_main}
	Let $\mathcal{Y}\mathcal{T}_H, \mathcal{P}_H \in \mathbb{R}^{mn \times mn} $ be defined in (\ref{eqn:main_system}) and (\ref{def:matrix_P}), respectively. Then for any $\epsilon >0$ there exists an integer $K$ such  that for all $n>{\color{black}Km}$
	\[
	\mathcal{P}_H^{-1}\mathcal{Y}\mathcal{T}_H = \mathcal{Q}_H + \mathcal{ {E}_{\textrm{H}}} + \mathcal{ {R}_{\textrm{H}}},
	\]
	where $ \mathcal{Q}_H$ is both symmetric and orthogonal, $\| \mathcal{ {E}_{\textrm{H}}} \|_2 \leq  \epsilon$, and $\textrm{rank} (\mathcal{ {R}_{\textrm{H}}}) \leq {\color{black}Km}.$
\end{theorem}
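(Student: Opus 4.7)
The plan is to bootstrap the two preceding results via the identity
\[
\mathcal{P}_H^{-1}\mathcal{Y}\mathcal{T}_H \;=\; \mathcal{C}_H^{-1}\mathcal{Y}\mathcal{T}_H \;+\; (\mathcal{P}_H^{-1}-\mathcal{C}_H^{-1})\mathcal{Y}\mathcal{T}_H.
\]
The first summand is handled directly by Proposition~\ref{thm:absC}, which provides the symmetric-orthogonal-plus-low-rank decomposition $\mathcal{Q}_0+\mathcal{R}_0$ with $\textrm{rank}(\mathcal{R}_0)\le m$; this will serve as the backbone of the sought decomposition. The second summand is exactly what Proposition~\ref{Prepo:mainCP} is designed to control, up to right-multiplication by $\mathcal{Y}\mathcal{T}_H$.

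Given $\epsilon>0$, I would apply Proposition~\ref{Prepo:mainCP} with an auxiliary tolerance $\epsilon':=\epsilon/C$ (with $C$ specified below) to obtain an integer $K$ and a splitting $\mathcal{C}_H^{-1}-\mathcal{P}_H^{-1}=\mathcal{E}+\mathcal{R}_1$ valid for all $n>2Km$, where $\|\mathcal{E}\|_2\le\epsilon'$ and $\textrm{rank}(\mathcal{R}_1)\le 2Km$. Setting
\[
\mathcal{Q}_H:=\mathcal{Q}_0,\qquad \mathcal{E}_H:=-\mathcal{E}\,\mathcal{Y}\mathcal{T}_H,\qquad \mathcal{R}_H:=\mathcal{R}_0-\mathcal{R}_1\mathcal{Y}\mathcal{T}_H,
\]
the required identity $\mathcal{P}_H^{-1}\mathcal{Y}\mathcal{T}_H=\mathcal{Q}_H+\mathcal{E}_H+\mathcal{R}_H$ follows at once, subadditivity of rank gives $\textrm{rank}(\mathcal{R}_H)\le m+2Km=(2K+1)m$, and $\mathcal{Q}_H=\mathcal{Q}_0$ inherits symmetric-orthogonality from Proposition~\ref{thm:absC}.

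The only remaining point is the spectral-norm bound $\|\mathcal{E}_H\|_2\le\|\mathcal{E}\|_2\,\|\mathcal{Y}\mathcal{T}_H\|_2$, for which I need $\|\mathcal{Y}\mathcal{T}_H\|_2$ to be majorized by a constant $C$ independent of $n$. Since $\mathcal{Y}=Y_n\otimes I_m$ is orthogonal, one has $\|\mathcal{Y}\mathcal{T}_H\|_2=\|T_{(n,m)}[g_\theta]\|_2\le\|g_\theta\|_{L^\infty}$, and the symbol $g_\theta$ in (\ref{eqn:symbol_h}) is a first-degree matrix trigonometric polynomial whose coefficients $A_{(0)},A_{(1)}$ are uniformly bounded in $n$ (since $\tau=T/n$ stays bounded as $n\to\infty$), so such a $C$ exists. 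With $\epsilon'=\epsilon/C$ we obtain $\|\mathcal{E}_H\|_2\le\epsilon$, as required.

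I do not foresee any serious obstacle: the heavy analytic content, namely Bernstein approximation of $x^{-1/2}$ and the rank estimate on $\mathcal{C}_H^{2K}-\mathcal{P}_H^{2K}$, has already been absorbed into Propositions~\ref{thm:absC} and~\ref{Prepo:mainCP}. What remains is essentially bookkeeping: identifying $\mathcal{Q}_H$, $\mathcal{E}_H$, $\mathcal{R}_H$, adding ranks, and re-tuning $\epsilon$ to $\epsilon/C$. The only place requiring a small additional observation is the uniform bound on $\|\mathcal{Y}\mathcal{T}_H\|_2$, which is standard but must be invoked explicitly since Proposition~\ref{Prepo:mainCP} controls $\mathcal{E}$ rather than $\mathcal{E}\,\mathcal{Y}\mathcal{T}_H$.
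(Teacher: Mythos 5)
Your proposal is correct and follows essentially the same route as the paper: decompose via $\mathcal{C}_H^{-1}\mathcal{Y}\mathcal{T}_H=\mathcal{Q}_0+\mathcal{R}_0$ (Proposition \ref{thm:absC}), absorb $(\mathcal{P}_H^{-1}-\mathcal{C}_H^{-1})\mathcal{Y}\mathcal{T}_H$ using Proposition \ref{Prepo:mainCP}, add ranks, and bound $\|\mathcal{E}\mathcal{Y}\mathcal{T}_H\|_2$ via the $n$-uniform bound $\|\mathcal{Y}\mathcal{T}_H\|_2\le\|g_\theta\|_\infty$. Your explicit re-tuning of the tolerance to $\epsilon/C$ is a minor tidiness improvement over the paper's statement, which leaves the constant implicit.
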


\begin{proof}
	By Proposition \ref{thm:absC}, we can write
	{\color{black}
	\begin{eqnarray*}
\left(\sqrt{\mathcal{T}_H^{\top}\mathcal{T}_H}\right)^{-1}\mathcal{Y} \mathcal{T}_H= \mathcal{Q}_0,
	\end{eqnarray*}
}

	By Proposition \ref{Prepo:mainCP}, we then have
	{\color{black}
	\begin{eqnarray*}
		\mathcal{P}_H^{-1}\mathcal{Y}\mathcal{T}_H & =&(\left(\sqrt{\mathcal{T}_H^{\top}\mathcal{T}_H}\right)^{-1}-\mathcal{E} -\mathcal{R}_1 )\mathcal{Y}\mathcal{T}_H\\
		&=& \mathcal{Q}_0 \underbrace{- \mathcal{E}\mathcal{Y}\mathcal{T}_H}_{=:\mathcal{ {E}}_H}   \underbrace{ - \mathcal{R}_1\mathcal{Y}\mathcal{T}_H }_{=:\mathcal{ {R}}_H},
	\end{eqnarray*}}
	where
	\begin{eqnarray*}
		\textrm{rank} ( \mathcal{ {R}}_H) {\color{black} = \textrm{rank}(\mathcal{R}_1\mathcal{Y}\mathcal{T}_H ) \leq Km}
	\end{eqnarray*}
	and
	\begin{eqnarray*}
		\|\mathcal{ {E}}_H\|_2 = \|-\mathcal{E}\mathcal{Y}\mathcal{T}_H\|_2 \leq \| \mathcal{Y}\mathcal{T}_H\|_2 \epsilon.
	\end{eqnarray*}
	Note that
	\begin{eqnarray*}
		\| \mathcal{Y}\mathcal{T}_H \|_2 = \|\mathcal{T}_H \|_2 \le  \| g_{\theta} \|_{\infty}.
	\end{eqnarray*}
	by using the general inequalities in \cite[Corollary 4.2]{Serra_Tilli_2002}, noticing that the Schatten $\infty$ norm in that paper is exactly the spectral norm $\|\cdot\|_2$. Furthermore, we have strict inequality, that is $\|YT\|_2=\|T\|_2< \| g_{\theta} \|_\infty$, whenever the infimum of $\vert g_{\theta} \vert$ is strictly bounded by $\| g_{\theta} \|_\infty$, as it often occurs, including in our case.
	
	Hence, $\| \mathcal{Y}\mathcal{T}_H \|_2$ is uniformly bounded with respect to $n$ and the proof is concluded.
\end{proof}

By \cite[Corollary 3]{BRANDTS20103100} and Theorem \ref{thm:main_hon-ter}, we know from Theorem \ref{thm:absP_main} that the preconditioned matrix sequence $\{ \mathcal{P}_H^{-1} \mathcal{Y} \mathcal{T}_H \}_n$ has clustered eigenvalues around $\pm1$, with a number of outliers independent of $n$. Hence, the convergence is independent of the time steps, and we can expect fast MINRES converges in exact arithmetic with $\mathcal{P}_H$ as a preconditioner.

\subsubsection{Eigenvalue analysis of $(\mathcal{P}_H^{-1}  \mathcal{T}_H)^T\mathcal{P}_H^{-1} \mathcal{T}_H$}

For a complete eigenvalue analysis, \cred{we examine} the preconditioned normal equation matrix $(\mathcal{P}_H^{-1}  \mathcal{T}_H)^T\mathcal{P}_H^{-1} \mathcal{T}_H$, even though we do not \cred{use} the conjugate gradient method on normal equations (CGNE) in this work.

As for implementations, the product $(\mathcal{P}_H^{-1}  \mathcal{T}_H)^T\mathcal{P}_H^{-1} \mathcal{T}_H\mathbf{v}$ for any vector $\mathbf{v}$ can be computed effectively using the similar arguments in Section \ref{sebsebsec:imple_MINRES}.

In the following result, we show that the eigenvalues of $(\mathcal{P}_H^{-1}  \mathcal{T}_H)^T\mathcal{P}_H^{-1} \mathcal{T}_H$ are clustered around $1$ with outliers independent of the time step $n$, which implies fast convergence of CGNE.

\begin{theorem}\label{thm:main_P_normal}
	Let $\mathcal{T}_H, \mathcal{P}_H \in \mathbb{R}^{mn \times mn}$ be defined in (\ref{theta}) and  (\ref{def:matrix_P}), respectively. Then,
	\begin{eqnarray*}
		(\mathcal{P}_H^{-1}  \mathcal{T}_H)^T\mathcal{P}_H^{-1} \mathcal{T}_H = I_{mn}+\mathcal{\widetilde {R}}_H,
	\end{eqnarray*}
	where $I_{mn}$ is the $mn$ by $mn$ identity matrix and $\textrm{rank} \corr{(\mathcal{\widetilde {R}}_{H})} \leq m$.
\end{theorem}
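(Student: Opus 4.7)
My plan is to reduce the claim to the computation of $\mathcal{T}_H\mathcal{T}_H^T$, which turns out to differ from $\mathcal{P}_H^2$ only by a rank-$m$ correction. Set $B:=\mathcal{P}_H^{-1}\mathcal{T}_H$; this is invertible because $\mathcal{P}_H$ is SPD and $\mathcal{T}_H$ is block lower bidiagonal with SPD diagonal blocks $A_{(0)}=M_m+\theta\tau K_m$. The matrix in question equals $B^T B = \mathcal{T}_H^T\mathcal{P}_H^{-2}\mathcal{T}_H$, while its ``flipped'' counterpart is $BB^T = \mathcal{P}_H^{-1}(\mathcal{T}_H\mathcal{T}_H^T)\mathcal{P}_H^{-1}$. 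Since $B^T B = B^{-1}(BB^T)B$, the matrices $B^T B - I_{mn}$ and $BB^T - I_{mn}$ are similar, and in particular have the same rank. It therefore suffices to control the rank of $BB^T - I_{mn}$, which is a quantity involving $\mathcal{T}_H\mathcal{T}_H^T$ and no ``sandwiched'' inverse.

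The main step is to establish the identity $\mathcal{T}_H\mathcal{T}_H^T = \mathcal{P}_H^2 - e_1e_1^T\otimes A_{(1)}^2$. To this end, I would write
\[
\mathcal{T}_H = I_n\otimes A_{(0)} + L_n\otimes A_{(1)},
\]
where $L_n\in\mathbb{R}^{n\times n}$ is the subdiagonal shift matrix. A direct verification gives $L_nL_n^T = I_n - e_1e_1^T$. Exploiting the commutativity $A_{(0)}A_{(1)}=A_{(1)}A_{(0)}$ (guaranteed by the paper's assumptions (i)--(iii)), the expansion of $\mathcal{T}_H\mathcal{T}_H^T$ collapses to
\[
I_n\otimes(A_{(0)}^2 + A_{(1)}^2) + (L_n + L_n^T)\otimes A_{(0)}A_{(1)} - e_1e_1^T\otimes A_{(1)}^2,
\]
and the first two summands are exactly the block tridiagonal Toeplitz matrix $T_{(n,m)}[|g_\theta|^2] = \mathcal{P}_H^2$, by comparison with the symbol (\ref{eqn:symbol_p}).

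Conjugating by $\mathcal{P}_H^{-1}$ then yields $BB^T = I_{mn} - \mathcal{P}_H^{-1}(e_1e_1^T\otimes A_{(1)}^2)\mathcal{P}_H^{-1}$, whose correction has rank at most $\operatorname{rank}(e_1e_1^T)\cdot\operatorname{rank}(A_{(1)}^2)\leq m$. By the similarity argument of the first paragraph, $\widetilde{\mathcal{R}}_H := B^T B - I_{mn}$ has rank at most $m$, which is the desired conclusion. I do not expect a substantive obstacle here; the only subtlety is to work with $BB^T$ rather than $B^T B$ (so that the two copies of $\mathcal{T}_H$ can be gathered), and then to use commutativity of $A_{(0)}$ and $A_{(1)}$ to recognize $\mathcal{P}_H^2$ in the cross-terms.
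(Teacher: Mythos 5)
Your proposal is correct and takes essentially the same route as the paper: the heart of both arguments is the direct computation that a Gram matrix of $\mathcal{T}_H$ differs from $\mathcal{P}_H^2$ by a single rank-$\le m$ corner block (you use $\mathcal{T}_H\mathcal{T}_H^T = \mathcal{P}_H^2 - e_1e_1^T\otimes A_{(1)}^2$, relying on commutativity of $A_{(0)}$ and $A_{(1)}$, while the paper uses $\mathcal{T}_H^T\mathcal{T}_H$ with the block $A_{(1)}^2$ in the last diagonal position), after which a rank-preserving conjugation transfers the bound to the preconditioned normal matrix. Your similarity argument between $B^TB$ and $BB^T$ with $B=\mathcal{P}_H^{-1}\mathcal{T}_H$ is a slightly tidier packaging of the paper's manipulation (which instead writes $I_{mn}-\mathcal{T}_H\mathcal{P}_H^{-2}\mathcal{T}_H^T=\mathcal{T}_H\mathcal{P}_H^{-2}\widetilde{\mathcal{R}}_0\mathcal{T}_H^{-1}$ and then conjugates by the flip $\mathcal{Y}$), but it is not a substantively different proof.
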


\begin{proof}
	Direct computations show that
	\begin{eqnarray*}\label{eqn:Psquare}
		\mathcal{P}_H^2-\mathcal{T}_H^T \mathcal{T}_H
		=
		\underbrace{\begin{bmatrix}
				O_m &   & & \\
				&   \ddots  &   &   \\
				&       &  O_m  &   \\
				&     &   & A^2_{(1)}  \\
		\end{bmatrix}}_{=:\mathcal{\widetilde {R}}_{0}},
	\end{eqnarray*} where $\textrm{rank}(\mathcal{\widetilde {R}}_{0}) \leq m$. The matrix equation further leads to
	\begin{eqnarray}\label{eqn:Psquare_inverse}
		I_{mn}-\mathcal{T}_H\mathcal{P}_H^{-2}\mathcal{T}_H^T=\underbrace{\mathcal{T}_H \mathcal{P}_H^{-2} \mathcal{\widetilde {R}}_{0} \mathcal{T}_H^{-1}}_{=:\mathcal{\widetilde {R}}_{1}},
	\end{eqnarray}
	where $\textrm{rank}(\mathcal{\widetilde {R}}_{1})=\textrm{rank}(\mathcal{\widetilde {R}}_{0}) \leq m$.
	
	We now consider the normal equation of $\mathcal{P}_H^{-1} \mathcal{Y} \mathcal{T}_H$, namely
	\begin{eqnarray*}
		(\mathcal{P}_H^{-1} \mathcal{Y} \mathcal{T}_H)^T \mathcal{P}_H^{-1} \mathcal{Y} \mathcal{T}_H=
		\mathcal{Y} \mathcal{T}_H \mathcal{P}_H^{-2}  \mathcal{T}_H^T \mathcal{Y}=\mathcal{Y} ( I_{mn}-\mathcal{\widetilde {R}}_{1}) \mathcal{Y}=  I_{mn} \underbrace{- \mathcal{Y}\mathcal{\widetilde {R}}_{1} \mathcal{Y}}_{=:\mathcal{\widetilde {R}}_H}
	\end{eqnarray*}
	where $\textrm{rank}(\mathcal{ \widetilde {R}}_H) =  \textrm{rank}(\mathcal{\widetilde {R}}_{1} )\leq  m $.
\end{proof}

Notice that every rank-one term in $\mathcal{\widetilde {R}}$ can perturb at the same time one eigenvalue of $1$. Therefore, by Theorem \ref{thm:main_P_normal}, there are at least \corr{$m(n-1)$} eigenvalues of the normal equation matrix are equal to $1$, which implies rapid convergence of CGNE.
\subsubsection{Modified Preconditioner}\label{subsec:MP}
\cred{
Following our spectral symbol matching strategy, the construction of the modified preconditioner $\mathcal{P}_{\theta}$ is motivated by the following approximation without requiring the fast diagonalizability of $M_{m}$ and $K_{m}$.

Since the original preconditioner $\mathcal{P}_H$ satisfy $\{\mathcal{P}_H^{-1}\mathcal{Y} \mathcal{T}_H\}_n$ are clustered around $\pm 1$ , where $	g_{\theta}(x)=A_{(0)}+A_{(1)}e^{\textbf{i}x}$ defined by (\ref{eqn:symbol_h}), we can approximate the spectral symbol $g_{\theta}$ 
\begin{eqnarray*}
    \sqrt{\vert g_{\theta} \vert^2} = \sqrt{A_{(0)}^2+A_{(1)}^2+2A_{(0)} A_{(1)} \cos(x)}
\end{eqnarray*}
where
\begin{eqnarray*}
&&A_{(0)}^2+A_{(1)}^2+2A_{(0)} A_{(1)} \cos(x) \\
&=& (2-2\cos(x))M_{m}^{2} + (\theta^2+(1-\theta)^2+2\theta(1-\theta)\cos(x))\tau^2 K_{m}^{2}\\ 
&&+ 2((2\theta-1)+(1-2\theta)\cos(x))\tau K_m M_m
\end{eqnarray*}
via 
the following matrix-valued function
\begin{eqnarray*}
    \widetilde{g_\theta} = \sqrt{2-2\cos(x)}M_m+ \tau \sqrt{\theta^2+(1-\theta)^2+2\theta(1-\theta)\cos(x)}  K_m
\end{eqnarray*}

Hence, we can regard $\mathcal{P}_{\theta}$ as satisfying $\{\mathcal{P}_{\theta}^{-1}\mathcal{Y} \mathcal{T}_H\}_n$ are clustered around $\pm 1$.

In matrix form, the corresponding preconditioner generated by $\widetilde{g_\theta}$ is precisely our proposed modified preconditioner $\mathcal{P}_{\theta}$ defined by (\ref{def:matrix_mod_P}).

The product $\mathcal{P}_{\theta}^{-1} \mathbf{v}$ for any vector $\mathbf{v}$ can be implemented by the following three-step procedures, which does not require the fast diagonalizability of $M_m$ and $K_m$. Notice that both $\mathcal{H}$ and $\mathcal{H}_{\theta}$ from (\ref{def:matrix_mod_P}) are Tau matrices. Hence,
\begin{align*}
    \mathcal{P}_{\theta} &= \mathcal{H} \otimes M_{m} + \mathcal{H}_{\theta} \otimes \tau K_{m}\\
                             &= (S_n \Lambda_{\mathcal{H}} S_n) \otimes M_{m} + (S_n \Lambda_{\mathcal{H}_{\theta}}S_n) \otimes \tau K_{m}\\
                         &= (S_n \otimes I_m) (\Lambda_{\mathcal{H}} \otimes M_{m} + \Lambda_{\mathcal{H}_{\theta}} \otimes K_m) (S_n \otimes I_m)^{T},
\end{align*}
where $\Lambda_{\mathcal{H}}$ and $\Lambda_{\mathcal{H}_{\theta}}$ are respectively the diagonal matrices containing the eigenvalues of $\mathcal{H}$ and $\mathcal{H}_{\theta}$.

Hence, the computations of $\mathbf{z} = \mathcal{P}_{\theta}^{-1} \mathbf{v}$ can be implemented via the following three steps:
\begin{enumerate}[1.]
	\item $\textrm{Compute}~\widetilde{\mathbf{v}} =  (S_n \otimes I_m)\mathbf{v};$
	\item $\textrm{Compute}~\widetilde{\mathbf{z}} =(\Lambda_{\mathcal{H}} \otimes M_{m} + \Lambda_{\mathcal{H}_{\theta}} \otimes K_m )^{-1} \widetilde{\mathbf{v}};$
	\item $\textrm{Compute}~{\mathbf{z}} =  (S_n \otimes{ I_m})\widetilde{\mathbf{z}}$.
\end{enumerate}

Both Steps 1 \& 3 can be computed efficiently via fast sine transforms in $\mathcal{O}(mn\log{n})$ operations. As for Step 2, the shifted Laplacian systems can be efficiently solved for example using the multigrid methods \cite{doi:10.1137/15M102085X}. For more details regarding such efficient implementation, we refer to \cite[Section 3.1]{doi:10.1137/16M1062016}.

}

	\section{Extension to multistep methods}\label{sec:extension}

Our proposed preconditioning method can be easily extended for high-order in time discretization schemes. Consider again (\ref{eqn:heat}), using an $l$-order backward difference scheme for time, we have the following all-at-once system $\widehat{\mathcal{T}}\mathbf{u}=\widehat{\mathbf{f}}$ to solve for $\mathbf{u}$, where
\begin{equation}\label{eqn:matrix_gen}
	\widehat{\mathcal{T}}=\begin{bmatrix}
		A_{(0)} &   &  & &  & \\
		A_{(1)}  & \ddots   & & &  &\\
		\vdots & \ddots   & \ddots& &  & \\
		A_{(l)}  & \ddots & \ddots & \ddots& &  \\
		&   \ddots& \ddots &  \ddots & \ddots & \\
		&   & \ddots &  \ddots & \ddots & \ddots \\
		& &  & A_{(l)}   & \cdots & A_{(1)} & A_{(0)}
	\end{bmatrix}
\end{equation}
is {a $mn$ by $mn$ multiple block Toeplitz matrix, which is} generated by the matrix-valued function
\begin{equation}\label{eqn:symbol_h_gen}
	\widehat{g}(x) = A_{(0)} + A_{(1)}e^{\textbf{i}x} +  \cdots  + A_{(l)}e^{\textbf{i}lx},
\end{equation}
and $\widehat{\mathbf{f}}$ is an appropriate vector resulted from the adopted discretization schemes. Note that the matrices $A_{(k)}, k=1,\dots, l,$ are composed of the mass matrix $M_m$ and the stiffness matrix $K_m$.

Our approach is to first permute (\ref{eqn:matrix_gen}) and obtain a symmetric (indefinite) matrix
\begin{equation}\label{eqn:main_system_gem_sym}
	\mathcal{Y} \widehat{\mathcal{T}}
	=
	\begin{bmatrix}
		&   &  & A_{(l)}& \cdots & A_{(1)} & A_{(0)} \\
		&   & \iddots& \iddots&  \iddots& \iddots\\
		& \iddots   & \iddots& \iddots &  \iddots & \\
		A_{(l)}  & \iddots & \iddots & \iddots& &  \\
		\vdots &   \iddots& \iddots &   &  & \\
		A_{(1)}   & \iddots   &  &   &  &  \\
		A_{(0)}  & &  &    &  & &
	\end{bmatrix},
\end{equation} and then construct a corresponding sine transform based preconditioner $\widehat{\mathcal{P}}$ for it. We first compute the following matrix-valued function from (\ref{eqn:symbol_h_gen})
\begin{eqnarray}\label{eqn:symbol_h_gen_poly}
	\vert \widehat{g}(x) \vert^2  &=&  \underbrace{A_{(0)}^2 + A_{(1)}^2 + \cdots + A_{(l)}^2 }_{=:q_0(A)}\\\nonumber
	&& + \underbrace{( A_{(0)}A_{(1)} + A_{(1)}A_{(2)} + \cdots + A_{(l-1)}A_{(l)} ) }_{=:q_1(A)}2\cos{(x)}\\\nonumber
	&& + \cdots + \underbrace{(A_{(0)}A_{(l)})}_{=:q_l(A)} 2 \cos{(lx)},
\end{eqnarray}
where \cred{$q_k, k=0,\dots, l,$} are all matrix polynomials composed of \cred{$A_{(k)}, k=0,\dots, l$}. Now, by expressing $\cos{(kx)},k=1,\dots, l,$ in terms of $\cos{(x)}$, we can further rewrite $\vert \widehat{g} \vert^2 $ as
\begin{eqnarray}\nonumber
	\vert \widehat{g}(x) \vert^2  &=&  \overline{q}_0(A) + \overline{q}_1(A)\cos{(x)} + \cdots + \overline{q}_l(A) \cos^l{(x)},
\end{eqnarray}
where \cred{$\overline{q}_k, k=0,\dots, l,$} are certain matrix polynomials composed of \cred{$A_{(k)}, k=0,\dots, l.$}

	We propose the following SPD preconditioner, which approximates the symbol $\vert \widehat{g} \vert$ in the eigenvalue sense, for $\mathcal{Y} \widehat{\mathcal{T}}$:
\begin{align}\label{def:matrix_P_gen}
	\cred{\widehat{\mathcal{P}}}= \sqrt{ I_n\otimes \overline{q}_0(A) + P_n \otimes \overline{q}_1(A)  + \cdots + P_n^l \otimes \overline{q}_l(A) },
\end{align}
where $P_n$ is the same matrix given by (\ref{eqn:matrix_tri_p}).

Note that
\begin{align*}
	\cred{\widehat{\mathcal{P}}}&=  \sqrt{ I_n\otimes U_m \overline{q}_0(\Lambda) U_m^T + P_n \otimes U_m \overline{q}_1(\Lambda) U_m^T  + \cdots + P_n^l \otimes U_m \overline{q}_l(\Lambda) U_m^T } \\
	&=\sqrt{(S_n \otimes U_m)\big(  I_n \otimes \overline{q}_0(\Lambda) + D_n \otimes \overline{q}_1(\Lambda) + \cdots + D_n^l \otimes \overline{q}_l(\Lambda)   \big)(S_n \otimes U_m)^T}\\
	&=(S_n \otimes U_m)\sqrt{   I_n \otimes \overline{q}_0(\Lambda) + D_n \otimes \overline{q}_1(\Lambda) + \cdots + D_n^l \otimes \overline{q}_l(\Lambda)   }(S_n \otimes U_m)^T.
\end{align*}

Hence, the product $\cred{{\widehat{\mathcal{P}}}^{-1}}\mathbf{v}$ for any vector $\mathbf{v}$ can be computed effectively using the similar three-step procedures in Section \ref{sebsebsec:imple_MINRES}.

	\subsection{Eigenvalue analysis of $\widehat{\mathcal{P}}^{-1}\mathcal{Y}\widehat{\mathcal{T}}$}

As before, we first introduce {\color{black} matrix  $\left(\sqrt{ \widehat{\mathcal{T}}^{\top} \widehat{\mathcal{T}}}\right)^{-1}$} as an auxiliary for showing the eigenvalues of  $\widehat{\mathcal{P}} ^{-1}\mathcal{Y}  \widehat{\mathcal{T}}$  are clustered.
The following proposition guarantees the effectiveness of {\color{black} $\sqrt{ \widehat{\mathcal{T}}^{\top} \widehat{\mathcal{T}}}$} as a preconditioner for $\mathcal{Y}\widehat{\mathcal{T}}$, showing that the eigenvalues of {\color{black}$\{\left(\sqrt{ \widehat{\mathcal{T}}^{\top} \widehat{\mathcal{T}}}\right)^{-1}\mathcal{Y}  \widehat{\mathcal{T}}\}_n$} are clustered around $\pm 1$, which leads to fast convergence of MINRES. 
{\color{black}
\begin{proposition}\label{thm:absC_gen}
	Let $\mathcal{Y}\widehat{\mathcal{T}}, \widehat{\mathcal{T}} \in \mathbb{R}^{mn \times mn}$ be defined in (\ref{eqn:main_system_gem_sym}) and  (\ref{eqn:matrix_gen}), respectively. Then,
$
	\left(\sqrt{ \widehat{\mathcal{T}}^{\top} \widehat{\mathcal{T}}}\right)^{-1}\mathcal{Y} \widehat{\mathcal{T}}$ is symmetric and orthogonal, and eigenvalues are only $1$ and $-1$.
\end{proposition}
\begin{proof}
		Since the eigendecomposition of the symmetric matrix $\mathcal{Y} \widehat{\mathcal{T}}$ is $\mathcal{Y} \widehat{\mathcal{T}}=E^{\top}\mathcal{Q}_0 E$, we can have
	\begin{eqnarray*}
		\left(\sqrt{\widehat{\mathcal{T}}^{\top}\widehat{\mathcal{T}}}\right)^{-1}\mathcal{Y} \widehat{\mathcal{T}}&=&\left(\sqrt{(\mathcal{Y}\widehat{\mathcal{T}})^{\top}\mathcal{Y}\widehat{\mathcal{T}}}\right)^{-1}\mathcal{Y} \widehat{\mathcal{T}}\\
		&=&\left(\sqrt{(\mathcal{Y}\widehat{\mathcal{T}})^{2}}\right)^{-1}\mathcal{Y} \widehat{\mathcal{T}}\\
		&=&E^{\top}(\mathcal{Q}_0 ^{2})^{-1/2}E E^{\top}\mathcal{Q}_0 E\\
		&=&E^{\top}\widehat{\mathcal{Q}_0 } E
	\end{eqnarray*}
	where $\widehat{\mathcal{Q}_0 }$ is a diagonal matrix whose entries are either $-1$ or $1$. Therefore, $		\left(\sqrt{ \widehat{\mathcal{T}}^{\top} \widehat{\mathcal{T}}}\right)^{-1}\mathcal{Y} \widehat{\mathcal{T}}$ is symmetric and orthogonal, and hence its eigenvalues are only $-1$ and $1$.
	\end{proof}}

In addition, we require the following lemma and proposition for showing our main result.
	
\begin{lemma}\label{lemma:rankC_P_gen}
	Let ${\color{black}\widehat{\mathcal{T}}}, \widehat{\mathcal{P}} \in \mathbb{R}^{mn \times mn} $ be defined in (\ref{eqn:matrix_gen}) and (\ref{def:matrix_P_gen}), respectively. Then,
	\[
	\textrm{rank}({\color{black}{( \widehat{\mathcal{T}}^{\top} \widehat{\mathcal{T}})}^{K}}-\widehat{\mathcal{P}}^{2K}) \leq \corr{2Klm}
	\]
	for some positive integer $K$ provided that $n>\corr{2Klm}$.
\end{lemma}

\begin{proof}
	First, we observe from (\ref{eqn:symbol_h_gen_poly}) that
	\begin{eqnarray*}
		{T}_{(n,m)}[\vert \widehat{g} \vert ^2]&=&{T}_{(n,m)}[q_0(A) +q_1(A)2\cos{(x)}+ \cdots + q_l(A)2\cos{(lx)}]\\
		&=&\begin{bmatrix}
			q_0(A)& q_1(A)  &\cdots & q_l(A) &  & & &  &\\
			q_1(A)  & \ddots   & \ddots& \ddots & \ddots & & &  & \\
			\vdots & \ddots   & & &   & \ddots &   & &  \\
			q_l(A)  & \ddots &  & & & & \ddots   &  \\
			&   \ddots&  &   &  & & & \ddots &  \\
			&   & \ddots  &   &   &   &   & \ddots & q_l(A)  \\
			&   &   &  \ddots &   &   &   &  \ddots & \vdots  \\
			&  & &  &  \ddots & \ddots & \ddots &\ddots &q_1(A) \\
			&  &  & &  & q_l(A)   & \cdots & q_1(A)& q_0(A)
		\end{bmatrix}.
	\end{eqnarray*}
	Note that we also have
\begin{eqnarray*}
	&&{T}_{(n,m)}[\vert\widehat{g}\vert^2] \\
	&=&  {T}_{(n,m)}[ \overline{q}_0(A) + \overline{q}_1(A)\cos{(x)} + \overline{q}_2(A)\cos^2{(x)} + \cdots + \overline{q}_l(A) \cos^l{(x)}]\\
	&=&{T}_{(n,m)}[\overline{q}_0(A)] + {T}_{(n,m)}[\overline{q}_1(A)\cos{(x)}] + {T}_{(n,m)}[\overline{q}_2(A)\cos^2{(x)}] \\
	&&+ \cdots + {T}_{(n,m)}[ \overline{q}_l(A) \cos^l{(x)}]\\
	&=& I_n \otimes \overline{q}_0(A) +T_{(n,1)}[\cos{(x)}]\otimes \overline{q}_1(A) + T_{(n,1)}[\cos^2{(x)}]\otimes \overline{q}_2(A)\\
	&&+ \cdots + T_{(n,1)}[\cos^l{(x)}] \otimes \overline{q}_l(A)\\
	&=& I_n \otimes \overline{q}_0(A) + T_{(n,1)}[\cos{(x)}]\otimes \overline{q}_1(A) + ( T_{(n,1)}[\cos{(x)}]^2+ {R}_{(2)} )\otimes \overline{q}_2(A)\\
	&& + \cdots + (T_{(n,1)}[\cos{(x)}]^l + {R}_{(l)} ) \otimes \overline{q}_l(A),
\end{eqnarray*}
where ${R}_{(k)},k=2,\dots,l,$ are low rank matrices in the sense that $\textrm{rank}({R}_{(k)}) \leq 2(k-1)$.

Recalling from (\ref{def:matrix_P_gen}) that $\mathcal{\widehat{P}}^2=I_n\otimes \overline{q}_0(A) + P_n \otimes \overline{q}_1(A)  + \cdots + P_n^l \otimes \overline{q}_l(A)$ and also note that $P_n=T_{(n,1)}[\cos{(x)}]$, we have
\begin{eqnarray*}
	{T}_{(n,m)}[\vert\widehat{g}\vert^2] 
	&=& I_n \otimes \overline{q}_0(A) + P_n \otimes \overline{q}_1(A) + (\corr{P_n^2} + {R}_{(2)}) \otimes \overline{q}_2(A)\\
	&& + \cdots + (P_n^l + {R}_{(l)} ) \otimes \overline{q}_l(A)\\
	&=& \underbrace{I_n \otimes \overline{q}_0(A) + P_n \otimes \overline{q}_1(A) + \corr{P_n^2} \otimes \overline{q}_2(A) + \cdots + P_n^l \otimes \overline{q}_l(A)}_{=\mathcal{\widehat{P}}^2}\\
	&& +  \underbrace{{R}_{(2)} \otimes \overline{q}_2(A) + \cdots + {R}_{(l)} \otimes \overline{q}_l(A)}_{\widehat{\mathcal{R}}_1}.
\end{eqnarray*}

One can further show by direct computations that $\widehat{\mathcal{R}}_1$ has the following structure
\[
\corr{
\widehat{\mathcal{R}}_1
=
\begin{bmatrix}
	* & \cdots & * & & & &  &  & \\
	\vdots &  & \vdots & & & &  &  & \\
	* & \cdots & * & &  &&  &  & \\
	&  &  & &  &&  & & \\
	&  &  & &  && * & \cdots & *\\
	&  &  & & & & \vdots &  & \vdots\\
	&  &  & & & & * & \cdots & *\\
\end{bmatrix}
}
\] where $*$ represents a nonzero entry, since such a block structure is determined by the last term ${R}_{(l)} \otimes \overline{q}_l(A)$. Namely, it is a block matrix with in the \corr{Southeast and Northwest corners} and the block is of size {\color{black}$lm \times lm$}. 

We can now examine the structure of the matrix ${\color{black} \widehat{\mathcal{T}}^{\top} \widehat{\mathcal{T}}}-\widehat{\mathcal{P}}^2$. By (\ref{eqn:matrix_gen}), we have
\corr{
\begin{eqnarray*}
\widehat{\mathcal{T}}^{\top} \widehat{\mathcal{T}} - \widehat{\mathcal{P}}^2 &=& \widehat{\mathcal{T}}^{\top} \widehat{\mathcal{T}} - {T}_{(n,m)}[\vert \widehat{g}\vert^2] +{T}_{(n,m)}[\vert \widehat{g}\vert^2] -\widehat{\mathcal{P}}^2 \\
&=&
\begin{bmatrix}
 & & & & & & & &\\
&  & & & & & & &\\
 &  &  & &  && & &\\
 &  &  & &  &&  & & \\
& & & &  && * & \cdots & *\\
& & & & & & \vdots &  & \vdots\\
& & & & & & * & \cdots & *\\
\end{bmatrix}
+
\widehat{\mathcal{R}}_1,
\end{eqnarray*}
where the first matrix is a block matrix with a $lm \times lm$ block in the corner.} Thus, ${\color{black} \widehat{\mathcal{T}}^{\top} \widehat{\mathcal{T}}}-\widehat{\mathcal{P}}^2$ is a block matrix with {\color{black} a $lm \times lm$ block} in the \corr{two corners}. We have by direct computations{\color{black}
\[
(\widehat{\mathcal{T}}^{\top} \widehat{\mathcal{T}})^{ n_\alpha} (\widehat{\mathcal{T}}^{\top} \widehat{\mathcal{T}}-\widehat{\mathcal{P}}^2)\widehat{\mathcal{P}}^{2 n_\beta} =
\corr{
\begin{bmatrix}
* & \cdots & *& & & &  & & \\
\vdots &  & \vdots & & & & &  &\\
* & \cdots & * & &  &&  & &  \\
 &  &  & &  &&  & & \\
 & & & &  && * & \cdots & *\\
 &  & & & & & \vdots &  & \vdots\\
 & & & & & & * & \cdots & *\\
\end{bmatrix}.
}
\]}

Similar to (\ref{eqn:structure_CP}), by exploiting the simple structure of {\color{black}$\widehat{\mathcal{T}}^{\top} \widehat{\mathcal{T}} - \widehat{\mathcal{P}}^2$}, we can show that for given positive integers $n_\alpha$ and $n_\beta$ the matrix {\color{black} $(\widehat{\mathcal{T}}^{\top} \widehat{\mathcal{T}})^{ n_\alpha} (	\widehat{\mathcal{T}}^{\top} \widehat{\mathcal{T}} -\widehat{\mathcal{P}}^2)\widehat{\mathcal{P}}^{2 n_\beta} $} is a block matrix with \corr{a block in its Southeast and Northwest corners} and the block is of size $(n_\alpha+1)lm \times (n_\beta +1)lm$. Thus,
\begin{eqnarray*}
		(\widehat{\mathcal{T}}^{\top} \widehat{\mathcal{T}})^{K}-\widehat{\mathcal{P}}^{2K}
	&=& \sum_{i=0}^{K-1} (\widehat{\mathcal{T}}^{\top} \widehat{\mathcal{T}})^{K-i-1}(\widehat{\mathcal{T}}^{\top} \widehat{\mathcal{T}} - \widehat{\mathcal{P}}^2)\widehat{\mathcal{P}}^{2i}
\end{eqnarray*}
is also a block matrix with \corr{a block in the two corners and the size is $Klm \times Klm$}, provided that $n>\corr{2Klm}$. Hence, we have \corr{$\textrm{rank}((\widehat{\mathcal{T}}^{\top} \widehat{\mathcal{T}})^K-\widehat{\mathcal{P}}^{2K}) \leq 2Klm$.}
\end{proof}

	The following results, i.e., Proposition \ref{Prepo:mainCP_gen} and Theorem \ref{thm:absP_main_gen}, follow using similar arguments for showing \cred{Proposition} \ref{Prepo:mainCP} and Theorem \ref{thm:absP_main}. Hence, the proofs are left to the readers.

\begin{proposition}\label{Prepo:mainCP_gen}
	Let ${\color{black}\widehat{\mathcal{T}}}, \widehat{\mathcal{P}} \in \mathbb{R}^{mn \times mn} $ be defined in (\ref{eqn:matrix_gen}) and (\ref{def:matrix_P_gen}), respectively. Then for any $\epsilon >0$ there exists an integer $K$ such that for all $n>{\color{black}2Klm}$
	\[{\color{black}
	(\widehat{\mathcal{T}}^{\top} \widehat{\mathcal{T}})^{-1/2}}-\widehat{\mathcal{P}}^{-1} = \widehat{\mathcal{E}}_0 + \widehat{\mathcal{R}}_2,
	\]
	where $\|\widehat{\mathcal{E}}_0\|_2 \leq \epsilon$ and $\textrm{rank} ( \widehat{\mathcal{R}}_2) \leq {\color{black}2Klm}$. 
\end{proposition}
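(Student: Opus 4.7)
The plan is to replicate the proof of Proposition \ref{Prepo:mainCP} essentially verbatim, with Lemma \ref{lemma:rankC_P_gen} in place of Lemma \ref{lemma:rankC_P}. First I would set $f(x) = x^{-1/2}$ and, following the construction leading to (\ref{def:function_F}), let $\psi$ be the affine map from $[a,b]$ to $[-1,1]$ chosen so that $\sigma(\widehat{\mathcal{C}}^2) \cup \sigma(\widehat{\mathcal{P}}^2) \subset [a,b]$. Since $|\widehat{g}|^2$ is continuous and (under the standing SPD assumptions on $M_m, K_m$) strictly positive on $[-\pi,\pi]$, one can choose $a,b$ independently of $n$; this yields a uniform lower bound strictly above $1$ for the associated Bernstein parameters $\mathcal{X}_{\widehat{\mathcal{C}}^2}, \mathcal{X}_{\widehat{\mathcal{P}}^2}$. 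Then Theorem \ref{thm:Bernstein}, combined with functional calculus on the symmetric matrices $\widehat{\mathcal{C}}^2$ and $\widehat{\mathcal{P}}^2$, provides, for any prescribed tolerance, a polynomial $p_K$ of degree at most $K$ with
\[
\|\widehat{\mathcal{C}}^{-1} - p_K(\widehat{\mathcal{C}}^2)\|_2 \leq \epsilon \quad \text{and} \quad \|\widehat{\mathcal{P}}^{-1} - p_K(\widehat{\mathcal{P}}^2)\|_2 \leq \epsilon,
\]
for $K$ sufficiently large (depending on $\epsilon$ but not on $n$).

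Next, writing $p_K(x) = \sum_{i=0}^K a_i x^i$, I would introduce
\[
\widehat{\mathcal{R}}_2 \;:=\; p_K(\widehat{\mathcal{C}}^2) - p_K(\widehat{\mathcal{P}}^2) \;=\; \sum_{i=0}^{K} a_i\,\bigl(\widehat{\mathcal{C}}^{2i} - \widehat{\mathcal{P}}^{2i}\bigr).
\]
Applying Lemma \ref{lemma:rankC_P_gen} with $K$ replaced by each $i \leq K$, every summand $\widehat{\mathcal{C}}^{2i} - \widehat{\mathcal{P}}^{2i}$ vanishes outside the four-corner block pattern with corner blocks of size $ilm \times ilm$, which is itself contained in the four-corner block pattern with corner blocks of size $Klm \times Klm$. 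Hence, provided $n > 2Klm$ so that the upper-left and lower-right corners are disjoint, $\widehat{\mathcal{R}}_2$ inherits this four-corner support and therefore $\textrm{rank}(\widehat{\mathcal{R}}_2) \leq 2Klm$, exactly as in the heat-equation case.

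Finally, I would define
\[
\widehat{\mathcal{E}}_0 \;:=\; \bigl(\widehat{\mathcal{C}}^{-1} - p_K(\widehat{\mathcal{C}}^2)\bigr) + \bigl(p_K(\widehat{\mathcal{P}}^2) - \widehat{\mathcal{P}}^{-1}\bigr),
\]
so that $\widehat{\mathcal{C}}^{-1} - \widehat{\mathcal{P}}^{-1} = \widehat{\mathcal{E}}_0 + \widehat{\mathcal{R}}_2$, and the triangle inequality gives $\|\widehat{\mathcal{E}}_0\|_2 \leq 2\epsilon$. Rescaling the starting tolerance $\epsilon \mapsto \epsilon/2$ yields the bound claimed in the statement.

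The only genuine obstacle, as opposed to bookkeeping, is producing the $n$-uniform spectral interval $[a,b]$ needed to invoke Bernstein's theorem: one must check that neither the extreme eigenvalues of $\widehat{\mathcal{C}}^2$ nor those of $\widehat{\mathcal{P}}^2$ degenerate as $n$ grows. This is guaranteed by the SPD assumption on $M_m,K_m$ together with the fact that $|\widehat{g}|$ is bounded above and bounded away from zero on $[-\pi,\pi]$, so that the Toeplitz/$\tau$-type matrices they generate have condition numbers controlled independently of $n$. Once this is in hand, the rank count is entirely mechanical, inherited directly from Lemma \ref{lemma:rankC_P_gen}.
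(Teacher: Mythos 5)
Your proposal is correct and follows essentially the same route as the paper, which itself omits the proof and states that Proposition \ref{Prepo:mainCP_gen} follows by the same argument as Proposition \ref{Prepo:mainCP} with Lemma \ref{lemma:rankC_P_gen} replacing Lemma \ref{lemma:rankC_P}: Bernstein approximation of $x^{-1/2}$ applied to $\widehat{\mathcal{C}}^2$ and $\widehat{\mathcal{P}}^2$, the low-rank term $p_K(\widehat{\mathcal{C}}^2)-p_K(\widehat{\mathcal{P}}^2)$ with the four-corner support giving rank at most $2Klm$, and the triangle inequality for the small-norm term. Your explicit remark on choosing a single $n$-independent spectral interval $[a,b]$ so that one polynomial $p_K$ serves both matrices is a point the paper leaves implicit, and is a welcome clarification rather than a deviation.
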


\begin{theorem}\label{thm:absP_main_gen}
	Let $\mathcal{Y}\mathcal{\widehat{T}}, \mathcal{\widehat{P}} \in \mathbb{R}^{mn \times mn} $ be defined in (\ref{eqn:main_system_gem_sym}) and (\ref{def:matrix_P_gen}), respectively. Then for any $\epsilon >0$ there exists an integer $K$ such  that for all $n>{\color{black}2Klm}$
	\[
	\mathcal{\widehat{P}}^{-1}\mathcal{Y}\mathcal{\widehat{T}} = \mathcal{\widehat {Q}} + \mathcal{\widehat {E}} + \mathcal{\widehat {R}},
	\]
	where $ \mathcal{\widehat {Q}} $ is both symmetric and orthogonal, $\|\mathcal{\widehat {E}}\|_2 \leq  \epsilon$, and $\textrm{rank} (\mathcal{\widehat {R}}) \leq {\color{black}2Klm}.$
\end{theorem}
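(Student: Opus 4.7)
The plan is to imitate the proof of Theorem \ref{thm:absP_main} line by line, substituting in the ``hat'' quantities and the rank bounds appropriate to the $l$-step setting. First, I would invoke Proposition \ref{thm:absC_gen} to write
\[
\widehat{\mathcal{C}}^{-1}\mathcal{Y}\widehat{\mathcal{T}}=\widehat{\mathcal{Q}}_0+\widehat{\mathcal{R}}_0, \qquad \widehat{\mathcal{Q}}_0^{\,2}=I,\ \ \widehat{\mathcal{Q}}_0^{\,T}=\widehat{\mathcal{Q}}_0,\ \ \mathrm{rank}(\widehat{\mathcal{R}}_0)\le lm,
\]
and then, given $\epsilon>0$, apply Proposition \ref{Prepo:mainCP_gen} to obtain an integer $K$ (valid for all $n>2Klm$) such that
\[
\widehat{\mathcal{C}}^{-1}-\widehat{\mathcal{P}}^{-1}=\widehat{\mathcal{E}}_0+\widehat{\mathcal{R}}_2,\qquad \|\widehat{\mathcal{E}}_0\|_2\le\epsilon,\ \ \mathrm{rank}(\widehat{\mathcal{R}}_2)\le 2Klm.
\]

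Next, I would combine these two decompositions by right-multiplying the second by $\mathcal{Y}\widehat{\mathcal{T}}$:
\[
\widehat{\mathcal{P}}^{-1}\mathcal{Y}\widehat{\mathcal{T}}=\bigl(\widehat{\mathcal{C}}^{-1}-\widehat{\mathcal{E}}_0-\widehat{\mathcal{R}}_2\bigr)\mathcal{Y}\widehat{\mathcal{T}}=\widehat{\mathcal{Q}}_0\;\underbrace{-\,\widehat{\mathcal{E}}_0\mathcal{Y}\widehat{\mathcal{T}}}_{=:\widehat{\mathcal{E}}}\;+\underbrace{\widehat{\mathcal{R}}_0-\widehat{\mathcal{R}}_2\mathcal{Y}\widehat{\mathcal{T}}}_{=:\widehat{\mathcal{R}}}.
\]
Setting $\widehat{\mathcal{Q}}:=\widehat{\mathcal{Q}}_0$, subadditivity of the rank immediately gives $\mathrm{rank}(\widehat{\mathcal{R}})\le lm+2Klm=(2K+1)lm$, as required. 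For the norm of the perturbation, submultiplicativity yields $\|\widehat{\mathcal{E}}\|_2\le\|\mathcal{Y}\widehat{\mathcal{T}}\|_2\,\epsilon$.

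The one nontrivial step is justifying that $\|\mathcal{Y}\widehat{\mathcal{T}}\|_2$ is bounded uniformly in $n$, so that the factor in front of $\epsilon$ can be absorbed by rescaling. This is handled exactly as in Theorem \ref{thm:absP_main}: since $\mathcal{Y}$ is orthogonal, $\|\mathcal{Y}\widehat{\mathcal{T}}\|_2=\|\widehat{\mathcal{T}}\|_2$, and by \cite[Corollary 4.2]{Serra_Tilli_2002} applied to the block Toeplitz matrix $\widehat{\mathcal{T}}=T_{(n,m)}[\widehat{g}]$, one has $\|\widehat{\mathcal{T}}\|_2\le\|\widehat{g}\|_\infty$, a constant independent of $n$. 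Replacing $\epsilon$ by $\epsilon/\|\widehat{g}\|_\infty$ at the outset then gives $\|\widehat{\mathcal{E}}\|_2\le\epsilon$, concluding the proof.

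I expect no substantive obstacles beyond careful bookkeeping of the rank constant $(2K+1)lm$: the $lm$ comes from the circulant correction in Proposition \ref{thm:absC_gen}, while the $2Klm$ comes from the matrix-function approximation in Proposition \ref{Prepo:mainCP_gen}; both are already established, so the present theorem is essentially an assembly step. The only place where the multistep order $l$ visibly enters (beyond the rank bookkeeping) is through the size of the nonzero corner blocks in the supports of $\widehat{\mathcal{R}}_0$ and $\widehat{\mathcal{R}}_2$, but since we only use the rank estimates, this does not require new arguments here.
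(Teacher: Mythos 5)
Your proposal is correct and coincides with the paper's intended argument: the paper explicitly leaves this proof to the reader, stating that it follows by the same reasoning as Theorem \ref{thm:absP_main}, and your assembly of Proposition \ref{thm:absC_gen} and Proposition \ref{Prepo:mainCP_gen}, with the rank bound $lm+2Klm=(2K+1)lm$ and the uniform bound $\|\mathcal{Y}\widehat{\mathcal{T}}\|_2\leq\|\widehat{g}\|_\infty$ via \cite[Corollary 4.2]{Serra_Tilli_2002}, is exactly that reasoning. Your explicit rescaling of $\epsilon$ and your (correct) use of the condition $n>2Klm$ rather than the $n>2Km$ written in the theorem statement are only minor tidyings of the paper's bookkeeping, not a different route.
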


Since both $\mathcal{\widehat {E}}$ and $\mathcal{\widehat {R}}$ in Theorem \ref{thm:absP_main_gen} are symmetric, by \cite[Corollary 3]{BRANDTS20103100} and Theorem \ref{thm:main_hon-ter}, we know that the preconditioned matrix sequence $\{ \mathcal{\widehat{P}}^{-1} \mathcal{Y} \mathcal{\widehat{T}} \}_n$ has clustered eigenvalues around $\pm1$ with number of outliers independent of $n$. Hence, the convergence is independent of the time steps, and we can then expect that MINRES converges rapidly in exact arithmetic with $\mathcal{\widehat{P}}$ as a preconditioner.

	\section{Numerical examples}\label{sec:numerical}

In this section, we demonstrate the effectiveness of our proposed preconditioner. All numerical experiments are carried out using MATLAB on \cred{a Dell R640 server equipped with dual Xeon Gold 6246R 16-Cores 3.4GHz CPUs, 512GB RAM running Ubuntu 20.04 LTS}. The CPU time, which is denoted  by ``CPU'' in the tables below, is estimated in seconds using the MATLAB built-in \cred{functions} \textbf{tic/toc}. All Krylov subspace solvers are implemented using the build-in functions on MATLAB, and ``Iter'' refers to the iteration number required for convergence. Furthermore, we choose a zero initial guess and a stopping tolerance of $10^{-6}$ based on the reduction in relative residual norms. Throughout all examples, we consider finite difference methods with uniform spatial grids, which results in $M_m$ being the identity matrix and $K_m $ being diagonalized by the discrete sine transform. Note that in the examples ``DoF'' denotes the degree of freedom, and $\mathcal{C}_H$ is the existing absolute value block circulant preconditioner proposed in \cite{doi:10.1137/16M1062016}.

\begin{example}\normalfont \cite{doi:10.1137/16M1062016}\label{example_3}
	The \cred{first} example is a two-dimensional problem of solving (\ref{eqn:heat}) with $\Omega=(0,1) \times (0,1)$, $u_0(x,y)=x(x-1)y(y-1)$, $a(x,y)=10^{-5}$, $g=0$, $f=0${, and $T=1$}.
	
	For this example, we adopt the backward Euler method and \cred{the Crank-Nicolson method} in time for solving the equation and report the convergence results in Tables \ref{tab:example_3_MINRES_theta_one} \cred{and \ref{tab:example_3_crank}}, respectively. While $\mathcal{C}_H$ increases quickly in iteration number with the parameters for this ill-conditioned example, \cred{our proposed preconditioners $\mathcal{P}_H$ and $\mathcal{P}_{\theta}$ significantly outperform $\mathcal{C}_H$ in terms of both iteration numbers and CPU times. In addition, we observe that $\mathcal{P}_{\theta}$ is only slightly inferior in precondition effect when compared with $\mathcal{P}_H$. In fact, in most cases, their induced iteration numbers are identical. The use of $\mathcal{P}_{\theta}$ as a preconditioner appears to be excellent, considering that it is an approximation to $\mathcal{P}_H$.}

	\begin{table}[h]
		\begin{center}
			\begin{minipage}{\textwidth}
				\caption{Convergence results with MINRES for Example \ref{example_3} when $\theta=1$ (the backward Euler method)}\label{tab:example_3_MINRES_theta_one}
				\color{black}\begin{tabular*}{\textwidth}{@{\extracolsep{\fill}}lccccccccc@{\extracolsep{\fill}}}
					\toprule%
					\multicolumn{3}{@{}c@{}}{} & \multicolumn{2}{@{}c@{}}{$\mathcal{C}_H$} & \multicolumn{2}{@{}c@{}}{$\mathcal{P}_H$} & \multicolumn{2}{@{}c@{}}{$\mathcal{P}_{\theta}$} \\ \cmidrule{4-5}\cmidrule{6-7}\cmidrule{8-9}%
					$n$     & $m+1$  & DoF       & Iter             & CPU               & Iter             & CPU & Iter             & CPU    \\
					\midrule
					\multirow{4}{*}{$2^5$} 	&$	2^5	$&	30752	&	34	&	0.24	&	11	&	0.096	& 11 & 0.071\\
					&$	2^6	$&	127008	&	48	&	0.65	&	11	&	0.17 &  11 & 0.18	\\
					&$	2^7	$&	516128	&	59	&	2.18	&	11	&	0.57	& 11 & 0.56\\
					&$	2^8	$&	2080800	&	82	&	19.56	&	11	&	3.27	& 11 & 3.27 \\
					\midrule	\multirow{4}{*}{$2^6$} 	&$	2^5	$&	61504	&	34	&	0.30	&	11	&	0.12	& 11 & 0.11\\
					&$	2^6	$&	254016	&	48	&	1.04	&	11	&	0.26	& 11 & 0.27\\
					&$	2^7	$&	1032256	&	72	&	6.81	&	11	&	1.35	& 13 & 1.48\\
					&$	2^8	$&	4161600	&	82	&	40.82	&	11	&	7.28	& 13 & 8.37\\
					\midrule	\multirow{4}{*}{$2^7$} 	&$	2^5	$&	123008	&	34	&	0.49	&	13	&	0.21	& 13 & 0.22 \\
					&$	2^6	$&	508032	&	48	&	1.92	&	13	&	0.62	& 13 & 0.61\\
					&$	2^7	$&	2064512	&	72	&	15.04	&	13	&	3.68	& 13 & 3.73\\
					&$	2^8	$&	8323200	&	79	&	81.81	&	13	&	17.19	& 13 & 17.32\\
					\midrule	\multirow{4}{*}{$2^8$} 	&$	2^5	$&	246016	&	34	&	0.78	&	13	&	0.35	& 15 & 0.38\\
					&$	2^6	$&	1016064	&	48	&	4.62	&	13	&	1.61	& 15 & 1.78\\
					&$	2^7	$&	4129024	&	71	&	34.91	&	13	&	8.28	& 15 & 9.34\\
					&$	2^8	$&	16646400	&	79	&	157.74	&	14	&	35.95	& 15 & 38.32\\
					\botrule
				\end{tabular*}
			\end{minipage}
		\end{center}
	\end{table}

    \begin{table}[h]
		\begin{center}
			\begin{minipage}{\textwidth}
				\caption{\cred{Convergence results with MINRES for Example \ref{example_3} when $\theta=0.5$ (the Crank-Nicolson method)}}\label{tab:example_3_crank}
				\color{black}\begin{tabular*}{\textwidth}{@{\extracolsep{\fill}}lccccccccc@{\extracolsep{\fill}}}
					\toprule%
					\multicolumn{3}{@{}c@{}}{} & \multicolumn{2}{@{}c@{}}{$\mathcal{C}_H$} & \multicolumn{2}{@{}c@{}}{$\mathcal{P}_H$} & \multicolumn{2}{@{}c@{}}{$\mathcal{P}_{\theta}$} \\ \cmidrule{4-5}\cmidrule{6-7}\cmidrule{8-9}%
					$n$     & $m+1$  & DoF       & Iter             & CPU               & Iter             & CPU & Iter             & CPU    \\
					\midrule
					\multirow{4}{*}{$2^5$} 	&$	2^5	$&	30752	&	33	&	0.45	&	11	&	0.098	& 11 & 0.082\\
					&$	2^6	$&	127008	&	48	&	0.67	&	11	&	0.18 &  11 & 0.20	\\
					&$	2^7	$&	516128	&	59	&	2.42	&	11	&	0.63	& 11 & 0.63\\
					&$	2^8	$&	2080800	&	82	&	18.83	&	11	&	3.44	& 11 & 3.49 \\
					\midrule	\multirow{4}{*}{$2^6$} 	&$	2^5	$&	61504	&	34	&	0.38	&	11	&	0.15	& 11 & 0.14\\
					&$	2^6	$&	254016	&	48	&	1.10	&	11	&	0.30	& 13 & 0.36\\
					&$	2^7	$&	1032256	&	73	&	7.28	&	11	&	1.44	& 13 & 1.51\\
					&$	2^8	$&	4161600	&	83	&	44.01	&	11	&	7.64	& 13 & 8.74\\
					\midrule	\multirow{4}{*}{$2^7$} 	&$	2^5	$&	123008	&	34	&	0.68	&	13	&	0.23	& 13 & 0.24 \\
					&$	2^6	$&	508032	&	48	&	2.20	&	13	&	0.71	& 13 & 0.67\\
					&$	2^7	$&	2064512	&	72	&	16.06	&	13	&	3.98	& 13 & 3.97\\
					&$	2^8	$&	8323200	&	80	&	87.92	&	13	&	18.48	& 13 & 19.00\\
					\midrule	\multirow{4}{*}{$2^8$} 	&$	2^5	$&	246016	&	34	&	0.83	&	13	&	0.39	& 15 & 0.42\\
					&$	2^6	$&	1016064	&	48	&	5.02	&	13	&	1.73	& 15 & 1.97\\
					&$	2^7	$&	4129024	&	72	&	37.34	&	13	&	9.14	& 15 & 10.34\\
					&$	2^8	$&	16646400	&	79	&	165.53	&	14	&	37.91	& 15 & 40.49\\
					\botrule
				\end{tabular*}
			\end{minipage}
		\end{center}
	\end{table}
\end{example}

    \begin{example} \label{example_2D_Lin}
        \cred{This example is a two-dimensional problem of solving (\ref{eqn:heat}) with $\Omega=(0,1)^2$, $u_0(x,y)=x(1-x)y(1-y)$, $a(x,y)=10^{-5}\sin(\pi x y)$, $g=0, T=1$ and 
        \begin{multline*}
        f(x,y,t) = e^{-t} x (1-x) [2\times 10^{-5}\sin(\pi x y ) - y (1-y) - \pi \times 10^{-5} \cos(\pi x y) x (1-2y)]\\
                    +e^{-t} y (1-y) [2\times 10^{-5} \sin(\pi x y) - \pi \times 10^{-5} \cos(\pi x y) y (1-2x)].
        \end{multline*}
        This example has the closed-form analytical solution as follows
        \begin{eqnarray*}
            u(x,y,t) = e^{-t} x (1-x) y (1-y).
        \end{eqnarray*}
        Since the exact solution of this example is known, we can define the error estimate as follows
        \begin{eqnarray*}
            e_{n,m} = \| \mathbf{u} - \mathbf{u^*} \|_{\infty}
        \end{eqnarray*}
        where $\mathbf{u}$ denotes the iterative solution of the linear system in (\ref{theta}), and $\mathbf{u^*}$ denotes the exact solution of the heat equation on the mesh. 
        
        Tables \ref{tab:case_2D_Lin_backward} and \ref{tab:case_2D_Lin_Crank} show the convergence results of MINRES using the backward Euler method and the Crank-Nicolson method in time, respectively. Table \ref{tab:case_2D_Lin_error} shows the corresponding error results of MINRES using different preconditioners. Again, similar to the last example, we observed that our proposed preconditioners $\mathcal{P}_H$ and $\mathcal{P}_\theta$ considerably surpass $\mathcal{C}_H$ in this variable-coefficient case.}

        \begin{table}[h]
			\begin{center}
				\begin{minipage}{\textwidth}
					\caption{\cred{Convergence results with MINRES for Example \ref{example_2D_Lin} when $\theta=1$ (the backward Euler method)}}\label{tab:case_2D_Lin_backward}
					\color{black}\begin{tabular*}{\textwidth}{@{\extracolsep{\fill}}lccccccccc@{\extracolsep{\fill}}}
						\toprule%
						\multicolumn{3}{@{}c@{}}{} & \multicolumn{2}{@{}c@{}}{$\mathcal{C}_H$} & \multicolumn{2}{@{}c@{}}{$\mathcal{P}_H$} &\multicolumn{2}{@{}c@{}}{$\mathcal{P}_\theta$} \\\cmidrule{4-5}\cmidrule{6-7}\cmidrule{8-9}%
						$n$     & $m+1$  & DoF       & Iter             & CPU         & Iter             & CPU  & Iter             & CPU   \\
						\midrule	
                        \multirow{4}{*}{$2^5$} 	&$	2^5	$&	30752	& 	107	& 	0.63	& 	11	& 	0.084  	& 	11	& 	0.084	\\
						&$	2^6	$&	127008	& 	141	& 	1.95	& 	11	& 	0.17	& 	12	& 	0.16	\\
						&$	2^7	$&	516128	& 	218	& 	7.91	& 	11	& 	0.57	& 	13	& 	0.64	\\
						&$	2^8	$&	2080800	& 	315	& 	62.44	& 	12	& 	3.82	& 	16	& 	4.87	\\
						\midrule	\multirow{4}{*}{$2^6$} 	&$	2^5	$&	61504	& 	106	& 1.00		& 	11	& 	0.13	& 	13	& 	0.15	\\
						&$	2^6	$&	254016	& 	154	& 	3.20	& 	11	& 	0.25	& 	13	& 	0.31	\\
						&$	2^7	$&	1032256	& 	219	& 	21.27	& 	13	& 	1.44	& 	14  	& 	1.49	\\
						&$	2^8	$&	4161600	& 	307	& 	155.67	& 	13	& 	8.18	& 	17	& 	10.69	\\
						\midrule	\multirow{4}{*}{$2^7$} 	&$	2^5	$&	123008	& 	107	& 	1.49	& 	13	& 	0.22	& 	13	& 	0.21	\\
						&$	2^6	$&	508032	& 	160	& 	6.30	& 	13	& 	0.58	& 	14	& 	0.63	\\
						&$	2^7	$&	2064512	& 	218	& 	47.22	& 	13	& 	3.44	& 	15	& 	4.55	\\
                        &$	2^8	$&	8323200	& 	303	& 	315.43	& 	13	& 	17.64	& 	18	& 	23.49	\\
                        \midrule	\multirow{4}{*}{$2^8$} 	&$	2^5	$&	246016	& 	118	& 	2.50	& 	14	& 	0.35	& 	15	& 	0.36	\\
						&$	2^6	$&	1016064	& 	177	& 	14.86	& 	14	& 	1.58	& 	15	& 	1.45	\\
						&$	2^7	$&	4129024	& 	220	& 	111.71	& 	14	& 	10.00	& 	17	& 	11.22	\\
                        &$	2^8	$&	16646400& 	299	& 	601.57	& 	15	& 	39.08	& 	19	& 	48.44	\\
						\botrule
					\end{tabular*}
				\end{minipage}
			\end{center}
		\end{table}

        \begin{table}[h]
			\begin{center}
				\begin{minipage}{\textwidth}
					\caption{\cred{Convergence results with MINRES for Example \ref{example_2D_Lin} when $\theta=0.5$ (the Crank-Nicolson method)}}\label{tab:case_2D_Lin_Crank}
					\color{black}\begin{tabular*}{\textwidth}{@{\extracolsep{\fill}}lccccccccc@{\extracolsep{\fill}}}
						\toprule%
						\multicolumn{3}{@{}c@{}}{} & \multicolumn{2}{@{}c@{}}{$\mathcal{C}_H$} & \multicolumn{2}{@{}c@{}}{$\mathcal{P}_H$} &\multicolumn{2}{@{}c@{}}{$\mathcal{P}_\theta$} \\\cmidrule{4-5}\cmidrule{6-7}\cmidrule{8-9}%
						$n$     & $m+1$  & DoF       & Iter             & CPU         & Iter             & CPU  & Iter             & CPU   \\
						\midrule	
                        \multirow{4}{*}{$2^5$} 	&$	2^5	$&	30752	& 	106	& 	0.61	& 	11	& 	0.085  	& 	11	& 	0.076	\\
						&$	2^6	$&	127008	& 	141	& 	1.94	& 	11	& 	0.19	& 	12	& 	0.18	\\
						&$	2^7	$&	516128	& 	216	& 	8.75	& 	11	& 	0.70	& 	13	& 	0.70	\\
						&$	2^8	$&	2080800	& 	309	& 	67.75	& 	12	& 	4.22	& 	17	& 	5.63	\\
						\midrule	\multirow{4}{*}{$2^6$} 	&$	2^5	$&	61504	& 	106	& 	0.96	& 	11	& 	0.12	& 	13	& 	0.14	\\
						&$	2^6	$&	254016	& 	154	& 	3.38	& 	11	& 	0.27	& 	13	& 	0.32	\\
						&$	2^7	$&	1032256	& 	218	& 	18.64	& 	11	& 	1.16	& 	14  	& 	1.42	\\
						&$	2^8	$&	4161600	& 	304	& 	164.44	& 	13	& 	9.68	& 	17	& 	11.38	\\
						\midrule	\multirow{4}{*}{$2^7$} 	&$	2^5	$&	123008	& 	107	& 	1.53	& 	13	& 	0.21	& 	13	& 	0.21	\\
						&$	2^6	$&	508032	& 	160	& 	6.78	& 	13	& 	0.63	& 	13	& 	0.66	\\
						&$	2^7	$&	2064512	& 	218	& 	48.20	& 	13	& 	4.12	& 	15	& 	4.63	\\
                        &$	2^8	$&	8323200	& 	301	& 	337.99	& 	13 & 	18.98	& 	19	& 	26.58	\\
                        \midrule	\multirow{4}{*}{$2^8$} 	&$	2^5	$&	246016	& 	117	& 	2.63	& 	14	& 	0.36	& 	15	& 	0.38	\\
						&$	2^6	$&	1016064	& 	177	& 	15.35	& 	14	& 	1.47	& 	15	& 	1.56	\\
						&$	2^7	$&	4129024	& 	221	& 	114.99	& 	14	& 	9.39	& 	17	& 	11.16	\\
                        &$	2^8	$&	16646400& 	299	& 	648.21	& 	15	& 	42.38	& 	19	& 	52.54	\\
						\botrule
					\end{tabular*}
				\end{minipage}
			\end{center}
		\end{table}

    \begin{table}[h]
			\begin{center}
				\begin{minipage}{\textwidth}
					\caption{\cred{Error results with MINRES for Example \ref{example_2D_Lin} }}\label{tab:case_2D_Lin_error}
					\color{black}\begin{tabular*}{\textwidth}{@{\extracolsep{\fill}}lccccccccc@{\extracolsep{\fill}}}
						\toprule%
						\multicolumn{3}{@{}c@{}}{} & \multicolumn{3}{@{}c@{}}{Backward Euler} & \multicolumn{3}{@{}c@{}}{Crank-Nicolson} \\\cmidrule{4-6}\cmidrule{7-9}%
						$n$     & $m+1$  & DoF       & $\mathcal{C}_H$   & $\mathcal{P}_H$  & $\mathcal{P}_\theta$  & $\mathcal{C}_H$   & $\mathcal{P}_H$  & $\mathcal{P}_\theta$\\
						\midrule	
                        \multirow{4}{*}{$2^5$} 	&$	2^5	$&	30752	& 	6.14e-4	& 	6.14e-4	& 	6.14e-4	& 	3.12e-6  	& 	3.12e-6	& 	3.12e-6	\\
						&$	2^6	$&	127008	& 	6.14e-4	& 	6.14e-4	& 	6.14e-4	& 	3.12e-6	& 	3.12e-6	& 	3.12e-6	\\
						&$	2^7	$&	516128	& 	6.14e-4	& 	6.14e-4	& 	6.14e-4	& 	3.12e-6	& 	3.12e-6	& 	3.12e-6	\\
						&$	2^8	$&	2080800	& 	6.14e-4	& 	6.14e-4	& 	6.14e-4	& 	3.12e-6	& 	3.12e-6	& 	3.12e-6	\\
						\midrule	\multirow{4}{*}{$2^6$} 	&$	2^5	$&	61504	& 	3.08e-4	& 	3.08e-4	& 	3.08e-4	& 	8.12e-7	& 	7.99e-7	& 	8.02e-7	\\
						&$	2^6	$&	254016	& 	3.08e-4	& 	3.08e-4	& 	3.08e-4	& 	8.05e-7	& 	8.01e-7	& 	8.03e-7	\\
						&$	2^7	$&	1032256	& 	3.08e-4	& 	3.08e-4	& 	3.08e-4	& 	8.15e-7	& 	8.07e-7  	& 	8.01e-7	\\
						&$	2^8	$&	4161600	& 	3.08e-4	& 	3.08e-4	& 	3.08e-4	& 	8.15e-7	& 	8.02e-7	& 	8.02e-7	\\
						\midrule	\multirow{4}{*}{$2^7$} 	&$	2^5	$&	123008	& 	1.54e-4	& 	1.54e-4	& 	1.54e-4	& 	2.38e-7	& 	1.99e-7	& 	2.04e-7	\\
						&$	2^6	$&	508032	& 	1.54e-4	& 	1.54e-4	& 	1.54e-4	& 	4.08e-7	& 	2.00e-7	& 	2.00e-7	\\
						&$	2^7	$&	2064512	& 	1.54e-4	& 	1.54e-4	& 	1.54e-4	& 	4.78e-7	& 	2.03e-7	& 	2.56e-7	\\
                        &$	2^8	$&	8323200	& 	1.54e-4	& 	1.54e-4	& 	1.54e-4	& 	4.10e-7	& 	7.08e-7	& 	5.87e-7	\\
                        \midrule	\multirow{4}{*}{$2^8$} 	&$	2^5	$&	246016	& 	7.71e-5	& 	7.71e-5	& 	7.71e-5	& 	1.79e-7	& 	5.79e-8	& 	4.98e-8	\\
						&$	2^6	$&	1016064	& 	7.71e-5	& 	7.71e-5	& 	7.71e-5	& 	6.37e-7	& 	5.91e-8	& 	6.93e-8	\\
						&$	2^7	$&	4129024	& 	7.71e-5	& 	7.71e-5	& 	7.71e-5	& 	6.50e-7	& 	1.53e-7	& 	2.58e-7	\\
                        &$	2^8	$&	16646400& 	7.71e-5	& 	7.71e-5	& 	7.71e-5	& 	3.76e-7	& 	6.42e-7	& 	7.60e-7	\\
						\botrule
					\end{tabular*}
				\end{minipage}
			\end{center}
		\end{table}
    \end{example}
    
    \begin{example} \label{example_3D}
        \cred{The last example is a three-dimensional problem of solving (\ref{eqn:heat}) with $\Omega=(0,1)^3$, $u_0(x,y)=x(x-1)y(y-1)z(z-1)$, $a(x,y,z)=10^{-3}$, $g=0$, $f=0$, and $T=1$.

        Tables \ref{tab:case_3D} and \ref{tab:case_3D_crank} respectively show the convergence results of MINRES when the backward Euler method and the Crank-Nicolson method are used for time. Overall, the preconditioners $\mathcal{P}_H$ and $\mathcal{P}_\theta$ still perform better than $\mathcal{C}_H$ with regard to iteration numbers. In this relatively well-conditioned case, all preconditioned solvers appear to have comparable CPU times.}
        
        \begin{table}[h]
			\begin{center}
				\begin{minipage}{\textwidth}
					\caption{\cred{Convergence results with MINRES for Example \ref{example_3D} when $\theta=1$ (the backward Euler method)}}\label{tab:case_3D}
					\color{black}\begin{tabular*}{\textwidth}{@{\extracolsep{\fill}}lccccccccc@{\extracolsep{\fill}}}
						\toprule%
						\multicolumn{3}{@{}c@{}}{} & \multicolumn{2}{@{}c@{}}{$\mathcal{C}_H$} & \multicolumn{2}{@{}c@{}}{$\mathcal{P}_H$} &\multicolumn{2}{@{}c@{}}{$\mathcal{P}_\theta$} \\\cmidrule{4-5}\cmidrule{6-7}\cmidrule{8-9}%
						$n$     & $m+1$  & DoF       & Iter             & CPU               & Iter             & CPU  & Iter             & CPU     \\
						\midrule
						\multirow{4}{*}{$2^3$} 	&$	2^3	$&	2744	& 	11	& 	0.034	& 	10	& 	0.027  & 	13	& 	0.029 	\\
						&$	2^4	$&	27000	& 	18	& 	0.12	& 	12	& 	0.093	& 	14	& 	0.10	\\
						&$	2^5	$&	238328	& 	21	& 	0.54	& 	13	& 	0.39	& 	16	& 	0.49	\\
						&$	2^6	$&	2000376	& 	21	& 	5.91	& 	13	& 	5.06	& 	16	& 	6.07	\\
						\midrule	\multirow{4}{*}{$2^4$} 	&$	2^3	$&	5488	& 	14	& 	0.032	& 	12	& 	0.037	& 	14	& 	0.040	\\
						&$	2^4	$&	54000	& 	18	& 	0.17	& 	15	& 	0.15	& 	17	& 	0.18	\\
						&$	2^5	$&	476656	& 	21	& 	1.08	& 	15	& 	0.85	& 	18  & 	0.96	\\
						&$	2^6	$&	4000752	& 	24	& 	15.77	& 	17	& 	13.32	& 	18	& 	14.01	\\
						\midrule	\multirow{4}{*}{$2^5$} 	&$	2^3	$&	10976	& 	14	& 	0.059	& 	14	& 	0.070	& 	15	& 	0.075	\\
						&$	2^4	$&	108000	& 	18	& 	0.29	& 	17	& 	0.27	& 	19	& 	0.31	\\
						&$	2^5	$&	953312	& 	22	& 	2.37	& 	18	& 	2.00	& 	21	& 	2.30	\\
                        &$	2^6	$&	8001504	& 	24	& 	32.41	& 	19	& 	30.75	& 	22	& 	36.00	\\
                        \midrule	\multirow{4}{*}{$2^6$} 	&$	2^3	$&	21952	& 	14	& 	0.084	& 	15	& 	0.094	& 	17	& 	0.13	\\
						&$	2^4	$&	216000	& 	18	& 	0.49	& 	18	& 	0.50	& 	21	& 	0.55	\\
						&$	2^5	$&	1906624	& 	22	& 	6.13	& 	21	& 	7.05	& 	24	& 	7.94	\\
                        &$	2^6	$&	16003008	& 	24	& 	65.54	& 	21	& 	66.41	& 	24	& 	75.87	\\
						\botrule
					\end{tabular*}
				\end{minipage}
			\end{center}
		\end{table}

        \begin{table}[h]
			\begin{center}
				\begin{minipage}{\textwidth}
					\caption{\cred{Convergence results with MINRES for Example \ref{example_3D} when $\theta=0.5$ (the Crank-Nicolson method)}}\label{tab:case_3D_crank}
					\color{black}\begin{tabular*}{\textwidth}{@{\extracolsep{\fill}}lccccccccc@{\extracolsep{\fill}}}
						\toprule%
						\multicolumn{3}{@{}c@{}}{} & \multicolumn{2}{@{}c@{}}{$\mathcal{C}_H$} & \multicolumn{2}{@{}c@{}}{$\mathcal{P}_H$} &\multicolumn{2}{@{}c@{}}{$\mathcal{P}_\theta$} \\\cmidrule{4-5}\cmidrule{6-7}\cmidrule{8-9}%
						$n$     & $m+1$  & DoF       & Iter             & CPU               & Iter             & CPU  & Iter             & CPU     \\
						\midrule
						\multirow{4}{*}{$2^3$} 	&$	2^3	$&	2744	& 	14	& 	0.040	& 	10	& 	0.034  & 	13	& 	0.030 	\\
						&$	2^4	$&	27000	& 	18	& 	0.10	& 	13	& 	0.095	& 	15	& 	0.11	\\
						&$	2^5	$&	238328	& 	21	& 	0.61	& 	13	& 	0.44	& 	17	& 	0.55	\\
						&$	2^6	$&	2000376	& 	21	& 	6.23	& 	13	& 	5.11	& 	17	& 	6.71	\\
						\midrule	\multirow{4}{*}{$2^4$} 	&$	2^3	$&	5488	& 	14	& 	0.034	& 	12	& 	0.037	& 	15	& 	0.048	\\
						&$	2^4	$&	54000	& 	18	& 	0.20	& 	15	& 	0.19	& 	17	& 	0.18	\\
						&$	2^5	$&	476656	& 	21	& 	1.10	& 	15	& 	0.90	& 	20  & 	1.16	\\
						&$	2^6	$&	4000752	& 	25	& 	17.49	& 	17	& 	13.98	& 	20	& 	16.36	\\
						\midrule	\multirow{4}{*}{$2^5$} 	&$	2^3	$&	10976	& 	14	& 	0.066	& 	14	& 	0.064	& 	16	& 	0.081	\\
						&$	2^4	$&	108000	& 	18	& 	0.30	& 	17	& 	0.30	& 	19	& 	0.35	\\
						&$	2^5	$&	953312	& 	23	& 	3.03	& 	18	& 	2.86	& 	22	& 	3.46	\\
                        &$	2^6	$&	8001504	& 	24	& 	34.45	& 	19	& 	32.64	& 	22	& 	37.47	\\
                        \midrule	\multirow{4}{*}{$2^6$} 	&$	2^3	$&	21952	& 	14	& 	0.10	& 	15	& 	0.095	& 	17	& 	0.11	\\
						&$	2^4	$&	216000	& 	18	& 	0.51	& 	18	& 	0.50	& 	22	& 	0.61	\\
						&$	2^5	$&	1906624	& 	23	& 	7.56	& 	21	& 	7.23	& 	25	& 	8.49	\\
                        &$	2^6	$&	16003008	& 	24	& 	69.35	& 	21	& 	69.73	& 	26	& 	85.77	\\
						\botrule
					\end{tabular*}
				\end{minipage}
			\end{center}
		\end{table}
    \end{example}

	\section{Conclusion}\label{sec:conclusions}
We have developed in this work a sine transform based preconditioning method which can be used for solving a wide range of time-dependent PDEs. Namely, our proposed \cred{preconditioners} can be applied for a large class of symmetrized all-at-once system $\mathcal{Y}\mathcal{\widehat{T}}$ resulted from discretizing the underlying equation. The method is generic, and can be used with high-order discretization schemes for time.

We have adopted the Krylov subspace solver of MINRES by exploiting the symmetry of the permuted matrix. In particular, we emphasize that our sine transform based \cred{preconditioners can be seen as optimal or quasi-optimal preconditioners, which are both symmetric positive definite} and efficiently implemented, according to the previous work on the spectral distribution of symmetrized Toeplitz matrix sequences \cite{MazzaPestana2018,Ferrari2019}.
We observe that a quasi optimal behavior is obtained in the case of multilevel coefficient matrices, which is a good result given the theoretical barriers proved in \cite{SeTy2} \cred{for matrix algebra preconditioners with equimodular transforms like multilevel circulants (see also \cite{nega-gen} for more discussion and more general results)}.

Also, specifically designed for MINRES, our \cred{preconditioners consistently outperform an existing block circulant preconditioner in both iteration counts and CPU times as shown in each numerical example. It is observed that they can achieve rapid convergence, especially in the ill-conditioned case. We stress that the severity of ill-conditioning of the original linear system mainly comes from both the spatial and temporal discretization schemes for the underlying equation. In addition to effective preconditioning, one remedy is to develop better discretization schemes in the first place for both space and time, which would be a potential research direction in the next stage. As long as the resulting linear systems from such discretization schemes are of lower triangular banded block Toeplitz structure, our proposed preconditioners can be incorporated. The required structure, mainly as a result of time discretization through an all-at-once process, is not difficult to obtain.}

Our preconditioning technique \cred{(i.e., $\mathcal{P}_{H}$)} utilizes the fast diagonalizability of the Laplacian matrix, which relies mainly on the uniform spatial grid setting. We however emphasize that such a uniform grid assumption is a common \cred{starting point} in the all-at-once preconditioning context, as with many existing works such as \cite{LIN2021110221,WU2021110076}. Along this line, it would be a direction for future research to develop efficient preconditioning methods when the physical domain is irregular - in this setting the theory of generalized locally Toeplitz sequences could be exploited for the spectral and convergence analysis \cite{GLT-LAA2,Ba}. \cred{Also, our proposed preconditioner $\mathcal{P}_{\theta}$, which does not require such fast diagonalizability and has shown excellent preconditioning effect in the numerical tests, deserves further investigation.}

For another future work, it would be interesting to combine the symmetrization MINRES solver with the block $\epsilon$-circulant preconditioning technique \cite{doi:10.1137/20M1316354,doi:10.1137/19M1309869,WU2021110076} which has only \cred{been} shown applicable for GMRES. Such a combination would further advance the pioneering block circulant preconditioning theory (cooperated with MINRES) originally proposed in \cite{doi:10.1137/16M1062016}, which is still at an early stage of development.

As already observed, a further interesting case is that of variable coefficients.  When $a(x)$ is not constant or there is also a dependency on time, that is $a \equiv a(x,t)$, the spectral analysis can be performed and the related solvers can be designed via multilevel {GLT tools (see \cite{book-GLT-II})}, by assuming that both $n$ and $m$ are allowed to tend to infinity. In this setting, instead of dealing with \cred{a} matrix valued symbol (whose size is the fixed parameter $m$), we have $d$ further levels in the structure, where $d$ is the spatial dimensionality of the problem, but with scalar-valued symbols. Of course, this setting is a challenge \cred{(see \cite{SeTy2,nega-gen} and references therein) to be considered in future work, even though the numerical tests considered in this work seem promising in this direction.}


\section*{Declarations}

\subsection*{Ethical Approval and Consent to participate} 
Not applicable.

\subsection*{Consent for publication} 
The authors consent to publication of this work in Numerical Algorithms of Springer Nature, when accepted.

\subsection*{Human and Animal Ethics} 
Not applicable.

\subsection*{Availability of supporting data} 
Data sharing is not applicable to this article as no datasets were generated or analyzed during the current study.

\subsection*{Competing interests} 
The authors declare no competing interests.

\subsection*{Funding}
The work of S. Hon was supported in part by the Hong Kong RGC under grant 22300921, a start-up allowance from the Croucher Foundation, and a Tier 2 Start-up Grant from Hong Kong Baptist University. The work of S. Serra-Capizzano was supported in part by INDAM-GNCS.
\cred{Furthermore, the work of Stefano Serra-Capizzano wss funded from the European High-Performance Computing Joint Undertaking  (JU) under grant agreement No 955701. The JU receives support from the European Union’s Horizon 2020 research and innovation programme and Belgium, France, Germany, Switzerland. Stefano Serra-Capizzano is also grateful for the support of the Laboratory of Theory, Economics and Systems – Department of Computer Science at Athens University of Economics and Business.}

\subsection*{Authors' contributions}
S. Hon developed the methodology, wrote the main manuscript text, and conducted the main numerical experiments. \cred{P. Y. Fung improved the numerical experiments and modified the manuscript text. J. Dong modified the manuscript text.} S. Serra-Capizzano validated the methodology and modified the manuscript text. All authors reviewed the manuscript.


	\bibliography{sn-bibliography}

\end{document}